\documentclass[12pt]{article}
\usepackage{amssymb,latexsym, amsmath, enumerate, amsthm, mathscinet, mathtools}
\usepackage{color}
\usepackage{hyperref}
\hypersetup{
    CJKbookmarks=true,
	colorlinks=true,
	linkcolor=blue,
	filecolor=blue,      
	urlcolor=blue,
	citecolor=blue,
}

\def\R{\mathbb R}
\def\N{\mathbb N}

\def\G{\mathbb G}
\def\H{\mathbb H}
\def\X{\mathrm X}

\def\S{\mathcal S}
\def\SS{\mathbb S}
\def\M{\mathcal M}
\def\W{\mathcal W}
\def\g{\mathfrak{g}}
\def\L{\mathbb L}
\def\J{\mathbb J}
\def\P{\mathbf P}
\def\QQ{\mathbf Q}
\def\bL{\mathbf L}
\def\q{\mathbf q}
\def\BL{\mathrm {BL}}

\def\CC{\mathbf C}

\def\DP{{\mathcal D}^{+,\mathcal{P}}}

\def\RR{\mathbf R}

\def\h{\hat}

\def\supp{\mathrm{supp}}
\def\dim{\mathrm{dim}}

\def\Var{\mathrm{Var}}
\def\PP{\mathrm{P}}
\def\BV{\mathrm{BV}}
\def\F{\mathcal{F}}

\def\id{\mathrm{id}}
\def\U{\mathbb U}
\def\V{\mathbb V}

\newtheorem{theorem}{Theorem}
\newtheorem{lemma}{Lemma}
\newtheorem{remark}{Remark}

\newtheorem{proposition}{Proposition}

\numberwithin{equation}{section}

\title{Loomis--Whitney inequalities on Reiter--Heisenberg groups}
\author{Sheng-Chen Mao, Ye Zhang}

\date{}

\begin{document}

\renewcommand{\theequation}{\thesection.\arabic{equation}}
 \maketitle

\vspace{-0.5cm}

\bigskip
	
{\bf Abstract.} We establish a Loomis--Whitney inequality for the Reiter--Heisenberg groups $\G_{qp}$, a family of step-two Carnot groups that includes the Heisenberg groups when $q=1$. The proof is based on the duality between Brascamp--Lieb inequalities and entropy subadditivity: we first derive the result for $\G_{q1}$ from the known inequality on the first Heisenberg group, using conditional entropy and the invariance of differential entropy under volume-preserving diffeomorphisms; then we pass from $\G_{q1}$ to $\G_{qp}$ via a stability principle for Loomis--Whitney inequalities under finite central sums, which generalizes the argument in (Zhang, 2024~\cite{Z24}).  As consequences, we obtain the associated geometric projection inequality, a Gagliardo--Nirenberg--Sobolev inequality, and an isoperimetric inequality.
	
	\medskip
	
{\bf Mathematics Subject Classification (2020):} {\bf 26D15; 28A75; 28D20; 39B62; 43A80}
	
	\medskip
	
{\bf Key words and phrases:} Brascamp--Lieb inequality; Carnot group; Entropy; Loomis--Whitney inequality;  Reiter--Heisenberg group

\bigskip

\section{Introduction}

\subsection{Background and motivation}\label{ss11}

The classical Loomis--Whitney inequality in $\R^k (k \ge 2)$ states that
\begin{align}\label{LWc}
|E| \le \prod_{j = 1}^k |\P_j(E)|^{\frac{1}{k - 1}}, \qquad E\subset\R^k.
\end{align}
Here $|\cdot|$ denotes Lebesgue outer measure and $\P_j:\R^k\to\R^{k-1}$ is the coordinate projection defined by $\P_j(x)=\h{x}_j$, where $\h{x}_j$ is obtained from $x\in\R^k$ by deleting its $j$-th coordinate.

\medskip

Loomis and Whitney proved \eqref{LWc} by a discrete argument \cite{LW49}. The inequality underlies, among other applications, Sobolev inequalities and embeddings \cite{AF03, S02} and the multilinear Kakeya inequality \cite{BCT06, G15}; see also \cite{BN03, BZ88, CGG16, F92} and the references therein. Loomis--Whitney inequalities, as well as the closely related Brascamp--Lieb inequalities, have been approached through rearrangement \cite{BL76, BLL74}, optimal transport \cite{B982, B98, CGS19}, heat-flow monotonicity \cite{BCCT08, CLL04}, and entropy \cite{CC09}.

Recent work has extended Loomis--Whitney and Brascamp--Lieb inequalities to step-two Carnot groups \cite{FP22, Z24, CPT25, HS26, H26}. The noncommutative group law makes the natural coordinate projections nonlinear and obstructs several Euclidean arguments. In particular, the symmetries needed for the heat-flow approach are not necessarily preserved by the relevant flow. Most available proofs therefore use mapping properties of multilinear Radon-type transforms together with induction. An alternative method, introduced for corank-one Carnot groups in \cite{Z24}, uses the duality between Brascamp--Lieb inequalities and entropy subadditivity.

\medskip

The purpose of this paper is to develop the entropy method for the Reiter--Heisenberg groups. When the corank $q$ is larger than one, these groups lie beyond the corank-one setting of \cite{Z24}.

\subsection{Main result}\label{ss13}

For $q,p\in\N^*=\{1,2,3,\ldots\}$, let $\R^{q\times p}$ denote the space of real $q\times p$ matrices. The {\it Reiter--Heisenberg group $\G_{qp}$} is $\R^{q\times p}\times\R^p\times\R^q$ equipped with the group law
\begin{align}\label{glRH}
    (x,y,t) \cdot (x',y',t') = \left(x + x', y + y', t + t' + \frac{1}{2} (xy' - x'y) \right).
\end{align}
It is a step-two Carnot group, as recalled in Subsection \ref{ss21} (with $(x,y)$ denoting variables for the first layer and $t$ for the second layer), and it reduces to a Heisenberg group when $q=1$. Reiter--Heisenberg groups have been studied in several settings; see, for example, \cite{R74, TJ88, M15, MM24}. We identify $\R^{q\times p}$ with $\R^{qp}$ by writing $x=(x_{11},\ldots,x_{q1},\ldots,x_{1p},\ldots,x_{qp})$, where
\[
x = \begin{pmatrix}
  x_{11}& \ldots& x_{1p} \\
   \vdots & \ddots & \vdots \\
   x_{q1}& \ldots& x_{qp}
\end{pmatrix}.
\]
For $1\le l\le p$, set $x(l):=(x_{1l},\ldots,x_{ql})\in\R^q$, and for $1\le k\le q$, let $e_k$ be the $k$-th standard basis vector of $\R^q$. We define
\begin{align}\label{defpiij}
    \pi_{i}(x,y,t) &:= \begin{cases}
    \left(\h{x}_{i}, y, t + \frac{x_{kl} y_l}{2} e_k\right), &\mbox{if } 1 \le i \le qp \mbox{ and } i = (l - 1)q + k, \\
    \left(x,\h{y}_{i - qp}, t - \frac{y_{i - qp}x(i - qp) }{2} \right), &\mbox{if } qp < i \le (q + 1)p
    \end{cases}.
\end{align}
Here $\h{x}_i$ and $\h{y}_j$ are obtained by deleting the indicated coordinates. The maps in \eqref{defpiij} here agree with the projections defined in \eqref{pro1} for general step-two Carnot groups.

\medskip

We write $\|f\|_r$ for the $L^r$ norm of $f$. Our main theorem is the Loomis--Whitney inequality on the Reiter--Heisenberg group $\G_{qp}$ with an explicit constant (not necessarily sharp).

\begin{theorem}\label{t1}
On the Reiter--Heisenberg group $\G_{qp}$, the following Loomis--Whitney inequality holds:
\begin{align}\label{LWco1}
\int_{\G_{qp}} \prod_{j = 1}^{(q + 1)p} f_j(\pi_j(x,y,t))\,dx\,dy\,dt \le \|\RR\|_{\frac{3}{2} \to 3}^{\frac{3q}{qp + p + 2q - 1}} \prod_{j = 1}^{qp} \|f_j\|_{\frac{p(qp + p + 2q - 1)}{1 + p}} \prod_{j = qp + 1}^{(q + 1)p} \|f_j\|_{\frac{p(qp + p + 2q - 1)}{q + p}},
\end{align}
for all nonnegative measurable functions $f_1,\ldots,f_{(q+1)p}$ on $\R^{qp+p+q-1}$. Here $\|\RR\|_{\frac{3}{2}\to3}<+\infty$ denotes the operator norm of the Radon transform $\RR:L^{\frac32}(\R^2)\to L^3(\SS^1\times\R)$.
\end{theorem}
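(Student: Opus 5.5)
The argument has two stages, following the strategy announced in the abstract: prove the case $p=1$ first, then lift to general $p$ by a stability principle for finite central sums. Throughout I use the Carlen--Cordero-Erausquin duality between Brascamp--Lieb-type inequalities and entropy subadditivity. The inequality \eqref{LWco1} with constant $C$ and exponents $r_j$ is equivalent to
\[
\sum_j \frac{1}{r_j} H(\pi_j\# X) \ge H(X) - \log C
\]
for every random vector $X$ on $\G_{qp}$ with finite entropy. Here $H$ is differential entropy and the $\pi_j$ are the nonlinear projections \eqref{defpiij}. Since the duality argument only uses measurability of the $\pi_j$ and Gibbs' variational principle, linearity of the $\pi_j$ is not needed.

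\textbf{Stage 1: the groups $\G_{q1}$.} Here there are $q+1$ maps. The map $\pi_k$, for $1\le k\le q$, deletes $x_k$. The map $\pi_{q+1}$ deletes $y$. The target exponents are $r=\frac{3q}{2}$ for $\pi_1,\dots,\pi_q$ and $r=\frac{3q}{q+1}$ for $\pi_{q+1}$, with constant $\|\RR\|_{3/2\to3}^{3/(3q)\cdot q}=\|\RR\|^{1}$; at $q=1$ this is the first Heisenberg group inequality, which I take as known.

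Write $X=(x_1,\dots,x_q,y,t_1,\dots,t_q)$. The key observation is that $(x_k,y,t_k)$ is a copy of $\H^1$ with the projections $\pi_k$ and $\pi_{q+1}$ restricted to it. I then proceed in three steps.

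\begin{enumerate}
\item Apply the $\H^1$ entropy inequality conditionally on the remaining coordinates. For each $k$, conditioning on $Z_k=(x_{j},t_j)_{j\ne k}$, the map $\pi_{q+1}$ acts on the coordinate $t_j$, $j\neq k$, as $t_j-\frac{y x_j}{2}$. For fixed $Z_k$ this is a shear, so it is volume-preserving, and entropy is unchanged.
\item From this I obtain, for each $k$,
\[
\frac23 H(\pi_k\#X\mid\cdot)+\frac13 H(\text{$(y\text{-deleted})$ part}\mid\cdot)\ge H(x_k,y,t_k\mid Z_k)-\log\|\RR\|,
\]
after using the diffeomorphism $(x,y,t)\mapsto \pi_{q+1}$-coordinates to rewrite the conditional entropies in terms of $\pi_k\#X$ and $\pi_{q+1}\#X$ via the chain rule.
\item Sum over $k$ and combine with a Shearer/Han-type subadditivity in the $x$-variables. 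The specific weights $\frac{2}{3q}$ and $\frac{q+1}{3q}$ are chosen exactly so that the conditional terms telescope into $H(X)$.
\end{enumerate}

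\textbf{Stage 2: from $\G_{q1}$ to $\G_{qp}$.} $\G_{qp}$ is the central sum of $p$ copies of $\G_{q1}$, namely the columns $(x(l),y_l)$, with the centers $\R^q$ identified: $t=\sum_l t^{(l)}$. I would prove a stability lemma generalizing the central-sum argument of \cite{Z24}. Suppose $G_1$ and $G_2$ have common center $\R^q$ and satisfy Loomis--Whitney entropy inequalities. Then the sum $G=(G_1\times G_2)/\{(0,s,0,-s)\}$ satisfies one with the combined projections, where each factor's projections are extended by the identity on the other factor's first layer.

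For the proof of the lemma, fix the first-layer coordinates of the other summands and apply the factor inequality conditionally. The $t$-fibre is shared, so I use the averaging trick of \cite{Z24}: each summand's inequality controls $H(X)$, and a convex combination of the $p$ conditional inequalities reconstructs the full entropy. This costs rescaling the weights by $p$ and adding the entropies of the projected first layers, which are handled by Han's inequality.

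Iterating over $p$ summands and solving the resulting linear system for the weights produces exponents proportional to $p(qp+p+2q-1)$, with the constant raised to the power $\frac{3q}{qp+p+2q-1}$. A scaling/dimension check fixes the exponents: one needs $\sum_j \frac{1}{r_j}\dim(\text{target}) = \dim$ together with homogeneity under dilations.

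\textbf{Main obstacle.} I expect the hardest part to be the Stage 2 bookkeeping. The shared center means the projections of one summand alter $t$ via terms that depend on that summand's coordinates, so the conditional entropy identities require care: one must check that each relevant map is a volume-preserving diffeomorphism fibrewise. One must also choose the convex weights so that the constant ends up at exactly $\|\RR\|^{3q/(qp+p+2q-1)}$. I would first verify the case $p=2$ by hand and then induct.
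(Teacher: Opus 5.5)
Your two-stage architecture is the paper's, but the step that carries Stage~1 is set up incorrectly, and the extra ingredient that makes Stage~2 close is missing.

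In Stage~1 you condition on $Z_k=(x_j,t_j)_{j\ne k}$. This is a function of $\pi_k(X)$ but not of $\pi_{q+1}(X)=(x,\,t-\tfrac12 yx)$, because $t_j$ cannot be recovered once $y$ has been deleted. So the chain rule does not split $H(\pi_{q+1}X)$ along $Z_k$. Moreover, given $Z_k$, the coordinates $t_j-\tfrac12 yx_j$ ($j\ne k$) of $\pi_{q+1}(X)$ still vary with $y$, so what remains is not the $\H^1$ pair attached to $(x_k,y,t_k)$.

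You must instead condition on the sheared variable $\Phi_k=(x_j,\,t_j-\tfrac12 yx_j)_{j\ne k}$. It is a sub-vector of $\pi_{q+1}(X)$ as it stands. It becomes a sub-vector of $\pi_k(X)$ and of $X$ after the unit-Jacobian shears $t\mapsto t-\sum_{j\ne k}\tfrac12 x_jy\,e_j$ of the respective spaces, which do not change entropy (Lemma~\ref{ldiff}). The complementary variables are then $(y,\,t_k+\tfrac12 x_ky)$, $(x_k,\,t_k-\tfrac12 x_ky)$ and $(x_k,y,t_k)$, which is exactly the $\H^1$ configuration.

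The conditional $\H^1$ inequality carries weights $\tfrac23,\tfrac23$. Your $\tfrac23,\tfrac13$ fails under dilations, since the weights must sum to $\tfrac43$. It yields $H(\pi_kX)+H(\pi_{q+1}X)\ge\tfrac12H(\Phi_k)+\tfrac32H(X)-\tfrac32\ln\|\RR\|_{\frac32\to3}$. Han's inequality is then applied to $\pi_{q+1}(X)$ split into the $q$ blocks $(x_j,\,t_j-\tfrac12 yx_j)$: since $\Phi_k=\QQ_k\circ\pi_{q+1}$, it gives $\sum_kH(\Phi_k)\ge(q-1)H(\pi_{q+1}X)$. This is not a Han inequality ``in the $x$-variables''. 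Summing over $k$ then produces the weights $\tfrac{2}{3q}$ and $\tfrac{q+1}{3q}$ as an output rather than a choice.

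In Stage~2, write the random point of $\G_{\#}^N$ as $(X,T)$, with $X$ its first-layer part. Conditioning on the other blocks and applying Han's inequality to the block-deleting projections $\hat{\P}_l$ does not finish the argument. Because the weights of one copy sum to $Q/(Q-1)>1$, you are left with $\sum_jd_jH(\hat{\pi}_j(X,T))\ge\tfrac{N-1}{Q-1}H(X)+N\,H(X,T)-N\ln C$, where $d_j=c_k$ for $j=(l-1)n+k$. Here $H(X)$ is not bounded below in terms of $H(X,T)$ (concentrate $X$ and spread out $T$), and Han's inequality alone cannot remove it.

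The missing ingredient is the auxiliary estimate of Proposition~\ref{pWL}. Adding the map $(x,t)\mapsto x$ to the $\hat{\pi}_j$ gives a family whose linearization at the origin is a geometric Brascamp--Lieb datum: each coordinate is kept by exactly $nN$ of the $nN+1$ maps. The nonlinear Brascamp--Lieb theorem (Theorem~\ref{tnBL}) plus a dilation argument then gives $\sum_{j=1}^{nN}H(\hat{\pi}_j(X,T))+H(X)\ge nN\,H(X,T)$. Eliminating $H(X)$ with this inequality is exactly what produces the shift $N-1$ in $\hat c_j=\frac{c_k(Q-1)+N-1}{N(n(N-1)+Q-1)}$ and the power $\frac{Q-1}{n(N-1)+Q-1}$ of $C$. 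With $\G=\G_{q1}$ and $N=p$, these are the exponents and the constant in the statement.

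Finally, your scaling check is inconsistent. The relation $\sum_jr_j^{-1}(\text{target dimension})=\text{dimension}$ is the Euclidean condition. It contradicts the dilation condition $\sum_jr_j^{-1}=\frac{qp+p+2q}{qp+p+2q-1}$ whenever the corank $q$ is positive.
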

For completeness, the {\it Radon transform} (or {\it X-ray transform}) is defined by
\[
\RR f(\sigma,s):=\int_{\langle z,\sigma\rangle=s}f(z)\,dz,
\qquad (\sigma,s)\in\SS^1\times\R.
\]
Here $\langle \cdot,\cdot\rangle$ is the inner product on Euclidean spaces and $dz$ is one-dimensional Lebesgue measure on the line $\{z\in\R^2:\langle z,\sigma\rangle=s\}$. The required mapping property follows from \cite{OS82}.

\medskip

When $q = p = 1$, i.e. on the first Heisenberg group, the projections in \eqref{defpiij} take the form
\[
\pi_1(x,y,t)=\left(y,t+\frac12xy\right),
\qquad
\pi_2(x,y,t)=\left(x,t-\frac12xy\right),
\]
and the case $q = p = 1$ of Theorem \ref{t1} was proved by F\"assler and Pinamonti, which is the starting point of our proof of Theorem \ref{t1}.

\begin{theorem}[Theorem 2.4 of \cite{FP22}]\label{tH1}
For all nonnegative measurable functions $f_1,f_2$ on $\R^2$, we have
\begin{align}\label{LWH1}
\int_{\R^3} f_1\left(y,t+\frac12xy\right)f_2\left(x,t-\frac12xy\right)\,dx\,dy\,dt
\le \|\RR\|_{\frac32\to3}\|f_1\|_{\frac32}\|f_2\|_{\frac32}.
\end{align}
\end{theorem}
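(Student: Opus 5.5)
The plan is to reduce \eqref{LWH1} to one application of H\"older's inequality together with the $L^{\frac32}\to L^3$ bound for $\RR$. The idea is that, once one variable is fixed and the others are shifted suitably, the integral of $f_2$ becomes an X-ray transform. First I would straighten the first projection. For fixed $(x,y)$, substitute $u=t+\frac12xy$ in the $t$-integral; this is a unit-Jacobian shift. The second argument then becomes $t-\frac12xy=u-xy$, and Fubini gives
\[
\int_{\R^3} f_1\left(y,t+\tfrac12xy\right)f_2\left(x,t-\tfrac12xy\right)dx\,dy\,dt=\int_{\R^2} f_1(y,u)\,Tf_2(y,u)\,dy\,du,
\qquad Tf_2(y,u):=\int_\R f_2(x,u-xy)\,dx .
\]
H\"older's inequality with exponents $\frac32$ and $3$ then bounds this by $\|f_1\|_{\frac32}\|Tf_2\|_3$. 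It remains to show $\|Tf_2\|_3\le\|\RR\|_{\frac32\to3}\|f_2\|_{\frac32}$.

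Next I would identify $Tf_2$ with the Radon transform. For fixed $(y,u)$, the points $z=(x,u-xy)$ trace the line $\{z\in\R^2:\langle z,(y,1)\rangle=u\}$. Put
\[
\sigma(y)=\frac{(y,1)}{\sqrt{1+y^2}}\in\SS^1,\qquad s=\frac{u}{\sqrt{1+y^2}}.
\]
The parametrization $x\mapsto(x,u-xy)$ has speed $\sqrt{1+y^2}$, so
\[
Tf_2(y,u)=(1+y^2)^{-\frac12}\,\RR f_2\bigl(\sigma(y),u(1+y^2)^{-\frac12}\bigr).
\]

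I would then compute the $L^3$ norm by changing variables twice. First substitute $u=\sqrt{1+y^2}\,s$, so that $du=\sqrt{1+y^2}\,ds$. This gives
\[
\|Tf_2\|_3^3=\int_\R\int_\R|\RR f_2(\sigma(y),s)|^3(1+y^2)^{-1}\,ds\,dy .
\]
Then write $\sigma(y)=(\sin\theta,\cos\theta)$ with $\theta=\arctan y\in(-\frac\pi2,\frac\pi2)$. This map satisfies $d\theta=(1+y^2)^{-1}dy$ and is a bijection onto an open half of $\SS^1$. Hence $\|Tf_2\|_3^3$ is at most $\int_{\SS^1\times\R}|\RR f_2|^3\,d\sigma\,ds\le\|\RR\|_{\frac32\to3}^3\|f_2\|_{\frac32}^3$; the inequality $\|\RR f\|_{L^3}\le\|\RR\|_{\frac32\to3}\|f\|_{L^{3/2}}$ is the finite operator norm supplied by \cite{OS82}. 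Combining this with the H\"older step gives \eqref{LWH1}.

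The main point requiring care is the bookkeeping of the Jacobian factors. The arclength factor $(1+y^2)^{-1/2}$ cubed, together with the $\sqrt{1+y^2}$ from rescaling $s$, must produce exactly the angular density $(1+y^2)^{-1}dy$ on $\SS^1$. I expect this to work out as computed above. The only extra things to note are that only half the circle is covered, which merely helps the bound, and that measurability and Fubini are justified because all functions are nonnegative.
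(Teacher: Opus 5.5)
Your argument is correct: the unit-Jacobian shift $u=t+\frac12xy$, H\"older with exponents $(\frac32,3)$, and the Jacobian bookkeeping all check out. The factor $(1+y^2)^{-3/2}$ from arclength, the rescaling $du=\sqrt{1+y^2}\,ds$, and $d\theta=(1+y^2)^{-1}dy$ combine to give exactly the stated constant when $\SS^1$ carries arclength measure. The paper does not prove this statement itself; it imports it from \cite{FP22} and attributes the mapping property of $\RR$ to \cite{OS82}, and the constant $\|\RR\|_{\frac32\to3}$ reflects precisely your reduction to $\int f_1\,Tf_2$ with $Tf_2$ a reparametrized X-ray transform, so yours is essentially the intended proof written out.
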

Starting from Theorem \ref{tH1}, in \cite{FP22}, the authors treated higher-dimensional Heisenberg groups by induction on estimates associated with extreme points of the Newton polytope from \cite[Section 3]{S11}, followed by multilinear interpolation. The entropy method of \cite{Z24} provides another route in the corank-one case.

\begin{remark}\label{noniso}
The terms in the second product on the right-hand side of \eqref{LWco1} have a different $L^p$-exponent from the terms in the first product. This anisotropy arises because the left-invariant vector fields associated with the $y$-variables are used repeatedly to generate nontrivial elements of $\g_2$. The same phenomenon occurs for corank-one Carnot groups \cite{Z24}. More precisely, \cite[Theorem 2]{S11} rules out estimates whose associated points lie outside the Newton polytope defined in \cite[Section 3]{S11} (which is related to the bracket-generating structure of the underlying vector fields). The point corresponding to \eqref{LWco1} lies on the boundary of that polytope, where \cite[Theorem 3]{S11} does not apply.
\end{remark}

\subsection{Proof strategy and organization}\label{ss15}

The main tool in the proof is the duality between Brascamp--Lieb inequalities and entropy subadditivity. To be more precise, the proof of Theorem \ref{t1} has two stages. First, we establish Theorem \ref{t1} for $p=1$. Note that the groups $\G_{q1}$ also arise as important examples in \cite{Li21, NZ24}. We combine the first-Heisenberg estimate in Theorem \ref{tH1} with properties of the entropy to carry out this step. 

Second, we identify $\G_{qp}$ with the central sum of $p$ copies of $\G_{q1}$. We prove a general stability theorem showing how a Loomis--Whitney inequality on a step-two Carnot group passes to any finite central sum of copies of that group (which generalizes the result in \cite{Z24}). Applying this result with the inequality from the first stage gives the desired Loomis--Whitney inequality with an explicit constant.

\medskip

The novelty of the argument in this work is the use of the invariance of differential entropy under volume-preserving diffeomorphisms (see Lemma \ref{ldiff} below) in the first stage of the proof. This allows the entropy argument to extend beyond the corank-one setting treated in  \cite{Z24}.

\medskip

In Section \ref{s2} we recall some basic facts on step-two Carnot groups, differential entropy, Brascamp--Lieb inequalities, and the duality between Brascamp--Lieb inequalities and entropy subadditivity. In Section \ref{s3} we prove Theorem \ref{t1}. Finally in Section \ref{s4} we give the resulting geometric projection, Gagliardo--Nirenberg--Sobolev, and isoperimetric inequalities.

\section{Preliminaries}\label{s2}

\subsection{Step-two Carnot groups}\label{ss21}

Recall that a connected, simply connected Lie group $\G$ is a {\it step-two Carnot group} if its Lie algebra $\mathfrak{g}$ admits a stratification
\begin{align}\label{defstr}
\mathfrak{g} = \mathfrak{g}_1 \oplus \mathfrak{g}_2, \quad
[\mathfrak{g}_1, \mathfrak{g}_1] = \mathfrak{g}_2, \quad
[\mathfrak{g}_1, \mathfrak{g}_2] = \{0\},
\end{align}
where $[\cdot,\cdot]$ denotes the Lie bracket on $\mathfrak{g}$. Via the Lie exponential map, we identify $\G$ with $\R^n\times\R^m$, where $n,m\in\N^*$, and write the group law as
\[
(x, t) \cdot (x', t') = \left(x + x', t + t' + \frac{1}{2}\langle \U \, x, x' \rangle \right),
\]
where $\langle\U x,x'\rangle:=(\langle U^{(1)}x,x'\rangle,\ldots,\langle U^{(m)}x,x'\rangle)\in\R^m$; see, for example, \cite[Section~3.2]{BLU07}. Here $\U=\{U^{(1)},\ldots,U^{(m)}\}$ is an $m$-tuple of linearly independent, real, skew-symmetric $n\times n$ matrices, and $\langle\cdot,\cdot\rangle$ is the Euclidean inner product. The numbers $n$ and $m$ are called the {\it rank} and {\it corank}, respectively. We write $g=(x,t)$ for an element of $\G$. The identity is $0=(0,0)$, and the inverse of $g$ is $-g$. If $U^{(j)}=(U^{(j)}_{l,k})_{1\le l,k\le n}$ for $1\le j\le m$, the {\it the canonical left-invariant vector fields} are given by
\begin{align*}
\quad \X_l(g) := \frac{\partial}{\partial x_l} + \frac{1}{2} \sum_{j = 1}^m \left(\sum_{k = 1}^{n} U^{(j)}_{l, k} \, x_k \right) \frac{\partial}{\partial t_j}, \qquad \forall \, 1 \le l \le n.
\end{align*}

\medskip

Following \cite{FP22, Z24}, we define the relevant nonlinear projections on $\G$. For $1\le j\le n$, let $\L_j:=\R e_j$, where $e_j$ is the $j$-th standard basis vector of $\R^n$, and set
\[
\J_j := \{(x,t) \in \G : \, x_j = 0\}, \quad \forall \, 1 \le j \le n.
\]
Fixing $1 \le j \le n$, every $(x,t)\in\G$ has a unique decomposition $(x,t)=(y,s)\cdot\ell e_j$ with $(y,s)\in\J_j$ and $\ell e_j\in\L_j$. We identify $\J_j$ with $\R^{n+m-1}$ by deleting its $j$-th coordinate. The {\it $j$-th projection on $\G$}, $\pi_j:\G\cong\R^{n+m}\to\R^{n+m-1}$, sends $(x,t)$ to the $\J_j$ component of this decomposition. Explicitly,
\begin{align}\label{pro1}
\pi_j(x,t) &= \left(\h{x}_j,t - \frac{1}{2} \sum_{i = 1}^n x_i x_j\langle \U e_i, e_j\rangle \right),  \qquad 1 \le j \le n.
\end{align}
Recall that $\h{x}_j$ is obtained from $x\in\R^n$ by deleting its $j$-th coordinate. For the geometric interpretation and applications of these projections, see \cite[Lemma 1 and Remark 3]{Z24} and \cite{KLZZ25}, respectively. 

\medskip

The {\it dilations} on $\G$ are defined by
\begin{align}\label{defdi}
\delta_r(x,t) = (rx, r^2 t), \qquad \forall \, r > 0, (x,t) \in \G.
\end{align}
Each $\J_j$ inherits a dilation structure from $\G$. After identifying $\J_j$ with $\R^{n+m-1}$, define the {\it $j$-th dilation} $\delta^{(j)}_r$ by requiring that
\begin{align}\label{dlc}
\delta^{(j)}_r \circ \pi_j = \pi_j \circ \delta_r, \qquad \forall \, r > 0, 1 \le j \le n. 
\end{align}
The {\it homogeneous dimension} of $\G$ is $Q:=n+2m$. With $|\cdot|$ denoting Lebesgue measure, we have
\begin{align}\label{homoL}
    |\delta_r (A)| = r^Q |A|, \quad |\delta^{(j)}_r(B)| = r^{Q - 1} |B|, \quad \forall \, r> 0, A \subset \G \cong \R^{n + m}, B \subset \R^{n + m - 1}, 1 \le j \le n.
\end{align}

\medskip

In \cite[Section 8.1]{Li21}, Li constructed a step-two group from two step-two groups of the same corank by an operation other than direct product; this extends the construction of $\H^2$ from two copies of $\H^1$. Further properties were studied in \cite[Appendix C]{LZ21} and \cite[Section 5]{Z23}. We call this operation the {\it central sum}. Let $\G\cong\R^n\times\R^m$ and $\G'\cong\R^{n'}\times\R^m$ have the same corank, and suppose that the group structure of $\G'$ is determined by $\V:=\{V^{(1)},\ldots,V^{(m)}\}$, so that
\[
(y, s) \cdot (y', s') = \left(y + y', s + s' + \frac{1}{2}\langle \V \, y, y' \rangle \right).
\]
The {\it central sum} $\G\#\G'\cong\R^{n+n'}\times\R^m$ is defined by
\[
((x, y), t) \cdot ((x',y'), t') = \left((x + x', y + y'), t + t' + \frac{1}{2}\langle \U \, x, x' \rangle + \frac{1}{2}\langle \V \, y, y' \rangle \right).
\]
The central sum $\G\#\G'$ again has corank $m$. Iterating the construction defines the central sum of any finite number of step-two Carnot groups with the same corank.

\subsection{Differential entropy}\label{ss22}

The notion of entropy originates in mathematical physics \cite{B64} and information theory \cite{S48}. For the facts about differential entropy used below, see \cite[Chapter 8]{CT06}, \cite{I93}, or \cite[Section 2]{Z24}. Following \cite{CC09, Z24}, our sign convention is the negative of the usual information-theoretic convention. On a measure space $(\Omega,\S,\mu)$, let $f$ be a nonnegative measurable function satisfying $\int_\Omega f\,d\mu=1$. We define its {\it (differential) entropy} by
\begin{align}\label{defen}
S(f) := \int_\Omega f(x) \ln{f(x)} d\mu(x) = \int_{\supp f} f(x) \ln{f(x)} d\mu(x).
\end{align}
Here $\supp f:=\{x\in\Omega:f(x)>0\}$, which is the positivity set rather than the usual topological support. We adopt the convention $0\ln0=0$. The integral in \eqref{defen} need not be finite or even well defined. Unless stated otherwise, we assume that $S(f)\in\R$, equivalently,
\[
\int_{\supp f} f(x) |\ln{f(x)}| d\mu(x) < +\infty.
\]
We then say that the entropy is finite. This holds, in particular, when $f$ is bounded and $\supp f$ has finite measure. If a random variable $X$ taking values in $(\Omega,\S,\mu)$ has density $f$, written briefly as $X\sim f$, we also write $S(X):=S(f)$.

\medskip

Suppose that $X$ and $Y$ take values in $(\Omega,\S,\mu)$ and $(\Omega',\S',\mu')$, respectively, and that $(X,Y)\sim f$ on the corresponding product space. Then $Y\sim f_Y$, where the {\it marginal density} is
\begin{align}\label{dmd}
f_Y(y) := \int_\Omega f(x,y) d\mu(x).
\end{align}
For $y \in \Omega'$ such that $0 < f_Y(y) < +\infty$, the {\it conditional density of $X$ given $Y = y$}  is defined by
\begin{align}\label{dcd}
f(x|y) := \frac{f(x,y)}{f_Y(y)}.
\end{align}
By definition, $\int_\Omega f(\cdot|y)\,d\mu=1$. Hence, for almost every $y\in\supp f_Y$, the {\it conditional entropy of $X$ given $Y=y$} is
\begin{align}\label{dce}
S(X|Y = y) := S(f(\cdot|y))
\end{align}
if it exists (that is, $S(f(\cdot|y)) \in \R$). 

\medskip

Let $f$ be a probability density on $(\Omega,\S,\mu)$, $(M,\M,\nu)$ another measure space, and $p:\Omega\to M$ a measurable map. Suppose that $p_\#(f\,d\mu)\ll\nu$, so there is a density $f_{(p)}$ such that
\begin{align}\label{rnp}
p_\# (f d\mu) = f_{(p)} d\nu.
\end{align}
By the definition of pushforward, every bounded measurable function $\phi$ on $M$ satisfies
\begin{align}\label{pfe}
\int_\Omega \phi(p(x)) f(x) d\mu(x) = \int_M \phi(z) f_{(p)}(z) d\nu(z).
\end{align}
Taking $\phi\equiv1$ shows that $\int_Mf_{(p)}\,d\nu=1$. If $S(f_{(p)})\in\R$, we call it the {\it pushforward entropy under $p$}. When $X\sim f$, we have $p(X)\sim f_{(p)}$ and write $S(p(X)):=S(f_{(p)})$.

\medskip

We collect the required facts about differential entropy below. For a proof, we refer to Propositions 3-5 of \cite{Z24}.

\begin{proposition}\label{pcollection}
Assume $(X,Y) \sim f$ on $(\Omega \times \Omega', \S \times \S', \mu \times \mu')$.
\begin{enumerate}[(i)]
\item  If $S(X,Y), S(Y) \in \R$, then for almost every $y \in \supp f_Y$ we have
\begin{align}\label{relS0}
S(X|Y = y) f_Y(y) + f_Y(y) \ln{f_Y(y)} = \int_\Omega f(x,y) \ln{f(x,y)} d\mu(x).
\end{align}
In particular, we have $S(X|Y = y) \in \R$ for almost every $y \in \supp f_Y$ and 
\begin{align}\label{relS}
S(X,Y) = S(Y) + \int_{\supp f_Y} S(X|Y = y) f_Y(y) d\mu'(y).
\end{align}

\item If $S(X,Y), S(X), S(Y) \in \R$, then we have 
\[
S(X,Y) \ge S(X) + S(Y),
\]
where the equality holds if and only if $X$ and $Y$ are independent.

\item Suppose $p$ is a measurable map from $(\Omega, \S, \mu)$ to $(M, \M, \nu)$.  Let the map $\bar{p}$ be a measurable map from $(\Omega \times \Omega', \S \times \S', \mu \times \mu')$ to $(M \times \Omega', \M \times \S', \nu \times \mu')$ defined by $\bar{p}(x,y) = (p(x),y)$. Assume furthermore that $\bar{p}_\# (f d\mu d\mu') \ll \nu \times \mu'$ and
\[
\bar{p}_\# (f d\mu d\mu') = f_{(\bar{p})} d\nu d\mu'.
\]
Then for almost every $y \in \supp f_Y$, we have $p_\# (f(\cdot|y) d\mu) \ll \nu$ and 
\[
p_\# (f(\cdot|y) d\mu) = f_{(\bar{p})}(\cdot|y) d\nu.
\]
In other words, for almost every $y\in\supp f_Y$, we have $f(\cdot|y)_{(p)}=f_{(\bar p)}(\cdot|y)$. Thus $S(p(X)|Y=y)$ is unambiguous: it is both the pushforward entropy under $p$ of $X$ conditioned on $Y=y$ and the conditional entropy of $p(X)$ given $Y=y$.
\end{enumerate}
\end{proposition}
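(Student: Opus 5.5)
Proposition \ref{pcollection} is quoted from \cite{Z24}, but I would reprove its three parts directly from the definitions \eqref{dmd}--\eqref{dcd} and Fubini--Tonelli.

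\textbf{Part (i).} On $\supp f$ we have $f_Y(y)>0$. Since $S(Y)\in\R$ forces $f_Y<\infty$ almost everywhere on $\supp f_Y$, we may write, for such $y$,
\[
f(x,y)\ln f(x,y)=f_Y(y)\,f(x|y)\ln f(x|y)+f(x,y)\ln f_Y(y).
\]
Fix $y$ with $0<f_Y(y)<\infty$ and integrate in $x$. The second term contributes $f_Y(y)\ln f_Y(y)$. The first term contributes $f_Y(y)S(X|Y=y)$, provided this integral is well defined.

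The integrability issue is the main obstacle. I would handle it by applying Tonelli to $|f\ln f|$. Because $S(X,Y)\in\R$, we have $\int\!\!\int f|\ln f|<\infty$, so $\int_\Omega f(x,y)|\ln f(x,y)|\,d\mu(x)<\infty$ for almost every $y$. Together with $|f_Y\ln f_Y|<\infty$, the identity above then gives
\[
\int_\Omega f(x|y)\,|\ln f(x|y)|\,d\mu(x)<\infty .
\]
Hence $S(X|Y=y)\in\R$ almost everywhere, and \eqref{relS0} holds. Finally, integrate \eqref{relS0} in $y$ over $\supp f_Y$, using Fubini. This is justified by the integrability of $f\ln f$ and of $f_Y\ln f_Y$, the latter being exactly $S(Y)\in\R$. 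The result is \eqref{relS}.

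\textbf{Part (ii).} Consider the product $f_Xf_Y$, which is a probability density. By Gibbs' inequality, i.e.\ Jensen applied to $-\ln$,
\[
\int\!\!\int f\ln\frac{f}{f_Xf_Y}\ \ge\ 0,
\]
with equality iff $f=f_Xf_Y$ almost everywhere, which is independence. The left side equals $S(X,Y)-S(X)-S(Y)$. To split it this way I need $\int\!\!\int f\ln f_X=S(X)$ and $\int\!\!\int f\ln f_Y=S(Y)$, which follow by Tonelli from the finiteness assumptions. A little care is needed on the null set where $f_X$ or $f_Y$ vanishes while $f>0$: it has $f$-measure zero, since
\[
\int_{\{f_X=0\}}\int f\,d\mu'\,d\mu=\int_{\{f_X=0\}}f_X\,d\mu=0 .
\]

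\textbf{Part (iii).} Take a bounded test function $\phi(z,y)$ with a product form $\phi(z,y)=\alpha(z)\beta(y)$, and compute $\int\!\!\int \phi(p(x),y)f(x,y)$ in two ways. Using the pushforward under $\bar p$ and Fubini, it equals
\[
\int_{\Omega'}\beta(y)f_Y(y)\int_M\alpha(z)\,f_{(\bar p)}(z|y)\,d\nu(z)\,d\mu'(y).
\]
Here I use that the $y$-marginal of $f_{(\bar p)}$ is $f_Y$, which follows by taking $\alpha\equiv1$. Directly, it equals
\[
\int_{\Omega'}\beta(y)f_Y(y)\int_\Omega\alpha(p(x))\,f(x|y)\,d\mu(x)\,d\mu'(y).
\]
Since $\beta$ is arbitrary, the inner integrals agree for almost every $y$, for each fixed $\alpha$.

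To pass to the statement for all $\alpha$ simultaneously, I would run $\alpha$ over a countable generating family, for instance indicators of a countable $\pi$-system generating $\M$; this requires $\M$ to be countably generated, which holds in the Euclidean settings used here. A monotone class argument then gives $p_\#(f(\cdot|y)\,d\mu)=f_{(\bar p)}(\cdot|y)\,d\nu$ for almost every $y$.

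Of the three parts, the delicate steps are the almost-everywhere integrability in (i) and this countable-generation argument in (iii).
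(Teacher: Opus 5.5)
The paper gives no proof of this proposition; it refers to Propositions 3--5 of \cite{Z24}. Your self-contained argument is the standard one and is correct in the setting where the paper uses the result:
\begin{itemize}
\item for (i), the pointwise identity $f\ln f=f_Y\,f(\cdot|y)\ln f(\cdot|y)+f\ln f_Y$ at $y$ with $0<f_Y(y)<\infty$, combined with Tonelli;
\item for (ii), nonnegativity of the relative entropy of $f$ with respect to $f_Xf_Y$;
\item for (iii), identifying the disintegration through product test functions.
\end{itemize}

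Two caveats are worth recording. First, every part uses Fubini--Tonelli, so you are tacitly assuming that $\mu$, $\mu'$, $\nu$ are $\sigma$-finite. The paper assumes this implicitly as well.

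Second, your proof of (iii) only covers a countably generated $\mathcal M$, so it does not prove (iii) in the generality stated. The step from ``for each $\alpha$, for a.e.\ $y$'' to ``for a.e.\ $y$, for all $\alpha$'' is exactly where a general argument needs an extra ingredient. One option is a choice of densities for the family $p_\#(f(\cdot,y)\,d\mu)$ that is jointly measurable in $(z,y)$, obtained for instance from a regular conditional distribution. Once such densities are available, uniqueness of Radon--Nikodym derivatives on $M\times\Omega'$ finishes the proof.

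This limitation is harmless for the paper, because (iii) is only applied to projections between Euclidean spaces. There the Borel $\sigma$-algebra is countably generated. If one works with Lebesgue $\sigma$-algebras instead, the identity extends from Borel sets: $f_{(\bar p)}(\cdot|y)\,d\nu\ll\nu$, and every Lebesgue set lies between two Borel sets whose difference is $\nu$-null.
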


We also need the following invariance of differential entropy under volume-preserving diffeomorphisms.

\begin{lemma}\label{ldiff}
Let $X$ be a random vector on $\R^k$ with density. If $p:\R^k\to\R^k$ is a diffeomorphism with Jacobian determinant $|Jp|\equiv1$, then
\begin{align}\label{reldiff}
    S(X) = S(p(X)).
\end{align}
\end{lemma}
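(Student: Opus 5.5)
The plan is to compute the density of $p(X)$ by the change of variables formula and then change variables back in the entropy integral. Let $f$ be the density of $X$ and set $q:=p^{-1}$. Since $p$ is a diffeomorphism of $\R^k$ onto $\R^k$ with $|Jp|\equiv1$, the inverse $q$ is also a diffeomorphism with $|Jq|\equiv 1$. This follows by differentiating $p\circ q=\id$, which gives $Jq(z)=(Jp(q(z)))^{-1}$.

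First we identify $f_{(p)}$. For every bounded measurable $\phi$ on $\R^k$, the change of variables $z=p(x)$ gives
\[
\int_{\R^k}\phi(p(x))f(x)\,dx=\int_{\R^k}\phi(z)f(q(z))\,|Jq(z)|\,dz=\int_{\R^k}\phi(z)f(q(z))\,dz .
\]
By \eqref{pfe}, this means $p_\#(f\,dx)$ is absolutely continuous with density $f_{(p)}=f\circ q$ almost everywhere. In particular, $p(X)\sim f\circ q$.

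Next we compare the two entropies. Apply the same change of variables to the nonnegative functions $(f\ln f)_\pm$, or equivalently to $f|\ln f|$ restricted to $\supp f$. This gives
\[
\int_{\R^k}(f\circ q)(z)\,\ln\bigl((f\circ q)(z)\bigr)\,dz=\int_{\R^k}f(x)\ln f(x)\,dx ,
\]
using $\supp(f\circ q)=p(\supp f)$ together with the convention $0\ln0=0$. Because the identity holds separately for the positive and negative parts, $S(f)$ is finite, or well defined, exactly when $S(f\circ q)$ is, and the two values coincide. This yields \eqref{reldiff}.

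There is no real obstacle. The only care needed is measure-theoretic: the change of variables formula applies to nonnegative measurable functions under a $C^1$ diffeomorphism, so we should work with nonnegative integrands before combining them. This also shows that finiteness of the entropy transfers between $X$ and $p(X)$, which is the sense in which the lemma is used later.
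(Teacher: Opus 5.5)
Your proof is correct and follows the same route as the paper: you identify $f_{(p)}=f\circ p^{-1}$ by comparing the change-of-variables formula with the pushforward identity \eqref{pfe}, then change variables back in the entropy integral. Your extra care with $|Jq|\equiv1$ for the inverse and with splitting $f\ln f$ into positive and negative parts makes explicit what the paper leaves implicit.
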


\begin{proof}
Write $X\sim f$. A change of variables gives
\[
\int_{\R^k} \phi(p(x)) f(x) dx = \int_{\R^k} \phi(y) f(p^{-1}(y)) dy
\]
for every bounded measurable function $\phi$. Comparing this identity with \eqref{pfe} gives $f_{(p)}=f\circ p^{-1}$, and therefore
\[
S(p(X)) = S(f_{(p)} ) = \int_{\R^k} f(p^{-1}(y)) \ln{f(p^{-1}(y))} dy = \int_{\R^k} f(x) \ln f(x) dx = S(f) = S(X),
\]
where the third equality uses the same change of variables.
\end{proof}

\subsection{Brascamp--Lieb inequalities}\label{ss12}

The Brascamp--Lieb inequality is a far-reaching generalization of the Euclidean Loomis--Whitney inequality. It was formulated in \cite{BL76} in the study of optimal constants in Young's inequality. In general, it has the form
\begin{align}\label{BL}
\int_{\R^k} \prod_{j  =1}^m f_j^{q_j} (L_j (x)) dx \le \BL(\bL,\q) \prod_{j = 1}^{m} \left( \int_{\R^{k_j}} f_j(t) dt \right)^{q_j}, 
\end{align}
for all nonnegative measurable functions $f_j$ on $\R^{k_j}$, $1\le j\le m$. Here $k,m\in\N^*$, while $q_j\ge0$, $k_j\in\N^*$, and $L_j:\R^k\to\R^{k_j}$ is a linear surjection for each $j$. We write $\bL:=(L_1,\ldots,L_m)$ and $\q:=(q_1,\ldots,q_m)$, and call $(\bL,\q)$ a {\it Brascamp--Lieb datum}. The {\it Brascamp--Lieb constant} $\BL(\bL,\q)$ is the smallest constant for which \eqref{BL} holds and may equal $+\infty$.

\medskip

Lieb's theorem \cite[Theorem 6.2]{L90} shows that it suffices to optimize the Brascamp--Lieb constant over centered Gaussian inputs. The following finiteness criterion is due to \cite[Theorem 1.13 and Proposition 2.8]{BCCT08}; see also \cite[Theorem 6]{B982}.

\begin{theorem}\label{tfBL}
Let $(\bL,\q)$ be a Brascamp--Lieb datum. Then $\BL(\bL,\q)$ is finite if and only if the scaling condition
\begin{align}\label{scaling}
 k = \sum_{j = 1}^m q_j k_j,
\end{align}
and the dimension condition 
\begin{align}\label{dim}
\dim(V) \le \sum_{j = 1}^m q_j \dim(L_j V), \qquad \forall   \ \mbox{subspace $V \subset \R^k$}.
\end{align}
hold. In particular, $\BL(\bL,\q)=1$ if the geometric conditions
\begin{align}\label{geoc}
L_j L_j^* = \id_{k_j}, \quad \forall \, 1 \le j \le m, \quad  \sum_{j = 1}^m q_j L_j^* L_j = \id_k.
\end{align}
Here $\id_k$ denotes the identity matrix on $\R^k$.
\end{theorem}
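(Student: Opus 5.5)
Since Theorem \ref{tfBL} is quoted from \cite{BCCT08}, the plan below reconstructs its proof: necessity by explicit test functions, sufficiency by reduction to Gaussians and induction on dimension, and the geometric case by a determinant inequality. Throughout I would use Lieb's theorem \cite[Theorem 6.2]{L90}. It identifies $\BL(\bL,\q)$ with the supremum over positive definite $A_j$ on $\R^{k_j}$ of
\[
\Big(\prod_j \det(A_j)^{q_j}\Big/\det\Big(\sum_j q_j L_j^*A_jL_j\Big)\Big)^{1/2}.
\]
Every step below becomes a statement about this Gaussian quantity.

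\textbf{Necessity.} For the scaling condition \eqref{scaling}, I would take $f_j=\exp(-\pi\lambda|t|^2)$ and compare the powers of $\lambda$ on the two sides. The left side scales as $\lambda^{-k/2}$ and the right side as $\lambda^{-\sum q_jk_j/2}$. Finiteness for all $\lambda>0$ forces $k=\sum_j q_jk_j$.

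For the dimension condition \eqref{dim}, fix a subspace $V$. I would take $A_j$ equal to $\lambda$ on $L_jV$ and $1$ on its orthogonal complement, and let $\lambda\to0$. The numerator of the Gaussian quantity behaves like $\lambda^{\sum q_j\dim L_jV}$. The determinant in the denominator is at most $C\lambda^{\dim V}$, because on $V$ the form is $O(\lambda)$. If $\dim V>\sum_j q_j\dim L_jV$, the quotient therefore blows up.

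\textbf{Geometric case.} Here I need $\det(\sum_j q_jL_j^*A_jL_j)\ge\prod_j\det(A_j)^{q_j}$, with equality at $A_j=\id$; equality there shows $\BL=1$ rather than merely $\le1$. Write $P_j=L_j^*L_j$; these are orthogonal projections and $\sum_j q_jP_j=\id_k$. I would diagonalize each $A_j$, expand the determinant by Cauchy--Binet, and apply weighted AM--GM (Ball's argument). An alternative is to write $A_j=e^{B_j}$ and use concavity of $\log\det$ along the decomposition of the identity. Either route is standard.

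\textbf{Sufficiency in general: the main obstacle.} Assume \eqref{scaling} and \eqref{dim}. I would induct on $k$. Call a subspace $V$ \emph{critical} if $0\ne V\ne\R^k$ and equality holds in \eqref{dim} for $V$.

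If a critical $V$ exists, I would split the datum into two data. One is the restriction $(L_j|_V:V\to L_jV)$. The other is the quotient $\R^k/V\to\R^{k_j}/L_jV$. Both inherit the scaling and dimension conditions, the scaling condition because $V$ is critical. Fubini then gives $\BL(\bL,\q)\le \BL(\text{restriction})\cdot\BL(\text{quotient})$, and both factors are finite by induction.

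If no critical subspace exists, the datum is simple. Then I would show that the Gaussian supremum is attained, which gives finiteness. Normalize $\prod_j\det(A_j)^{q_j}=1$ and argue that maximizing sequences cannot degenerate. If eigenvalues of the $A_j$ escaped to $0$ or $\infty$, the flag of eigenspaces would produce a subspace $V$ whose dimension inequality is strict. That strict inequality, combined with the computation from the necessity part, makes the quotient tend to $0$ along the sequence, a contradiction; compactness then yields a maximizer. This compactness and degeneration analysis is the delicate point. If it resists, the fallback is to bring the datum to geometric position by a change of variables via the maximizer (Barthe/BCCT), and then invoke the geometric case.
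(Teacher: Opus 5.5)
The paper does not prove this theorem. It quotes it from \cite{BCCT08} (and \cite{B982}), and only the geometric case is used later (Proposition~\ref{pWL}, \eqref{main3}, and the proof of Theorem~\ref{tz24}). For that case your Ball--Barthe argument is complete. Your necessity arguments and the Fubini factorization through a critical subspace are also correct.

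The gap is the simple case of sufficiency, which is where finiteness actually has to be proved. You claim that a degenerating sequence produces a $V$ with strict inequality in \eqref{dim}, and that ``the computation from the necessity part'' then drives the Gaussian quotient to $0$. That computation runs the wrong way: it gives $\det(\sum_j q_jL_j^*A_jL_j)\le C\lambda^{\dim V}$, i.e.\ a \emph{lower} bound on the quotient. To make the quotient small, or even bounded, you need a \emph{lower} bound on this determinant along an arbitrary degeneration. Such a degeneration can have several eigenvalue scales, with the eigenbases of the $A_j$ moving independently in the different $\R^{k_j}$, and your sketch supplies no such bound.

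Even for your own two-scale family the determinant is of order $\lambda^{\dim\tilde V}$, where $\tilde V=\bigcap_j L_j^{-1}(L_jV)\supseteq V$, not $\lambda^{\dim V}$. For Loomis--Whitney in $\R^3$ and a generic plane $V$, the inequality is strict ($2<3$). Yet $\tilde V=\R^3$, the family is a pure dilation, and the quotient stays equal to $1$. The fallback is circular: passing to geometric position requires a Gaussian maximizer, and its existence is exactly what the compactness step was supposed to deliver.

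The idea that removes the simple case altogether is interpolation in the exponents. H\"older's inequality gives $\BL(\bL,\theta\q^0+(1-\theta)\q^1)\le\BL(\bL,\q^0)^{\theta}\,\BL(\bL,\q^1)^{1-\theta}$. The set of $\q\in[0,\infty)^m$ satisfying \eqref{scaling} and \eqref{dim} is a bounded polytope with finitely many distinct constraints, since $\dim L_jV\in\{0,\ldots,k_j\}$. So it suffices to prove finiteness at its vertices. At a vertex, $m$ linearly independent constraints are active, and the scaling equation accounts for only one of them. Hence exactly one of the following holds:
\begin{enumerate}[(a)]
\item some $q_j=0$: drop $L_j$ and induct on $m$;
\item $m=1$: then \eqref{dim} with $V=\ker L_1$ forces $L_1$ to be invertible and $q_1=1$;
\item some $V$ with $0\ne V\ne\R^k$ is critical: this is precisely the case your factorization and induction on $k$ already handle.
\end{enumerate}
With this in place of the compactness step, your outline becomes a complete proof, and no existence of extremizers is needed.
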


\medskip

Because the maps $\pi_j$ on step-two groups are generally nonlinear, we use a nonlinear Brascamp--Lieb inequality. The following result was proved by an induction-on-scales argument in \cite[Theorem 1.1]{BBBCF20}; see also \cite{BCW05} for nonlinear Loomis--Whitney inequalities.

\begin{theorem}\label{tnBL}
Let $(\bL,\q)$ be a Brascamp--Lieb datum and suppose that $B_j : \R^k \to \R^{k_j}$ are $C^2$ submersions in a neighborhood of a point $x_0$ and $d B_j(x_0) = L_j$ for $1 \le j \le m$. Then for every $\epsilon > 0$ there exists a neighborhood $U$ of $x_0$ such that 
\begin{align}\label{locBL}
\int_U   \prod_{j  =1}^m f_j^{q_j} (B_j (x)) dx  \le (1 + \epsilon) \BL(\bL,\q) \prod_{j = 1}^{m} \left( \int_{\R^{k_j}} f_j(t) dt \right)^{q_j}
\end{align}
for all nonnegative measurable functions $f_j$ on $\R^{k_j}$, $j=1,\ldots,m$.
\end{theorem}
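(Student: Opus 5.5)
The plan is to follow the induction-on-scales strategy of \cite{BBBCF20}. Three ingredients are combined: Lieb's Gaussian extremizability, which holds for the linear datum $(\bL,\q)$ and for all its nearby perturbations; the local continuity of the Brascamp--Lieb constant in the linear maps; and a self-improving recursion for a scale-dependent nonlinear constant. Throughout, I normalize so that $x_0=0$ and $B_j(0)=0$. The case $\BL(\bL,\q)=+\infty$ is trivial, so I assume $\BL(\bL,\q)<\infty$. By Theorem~\ref{tfBL} the scaling condition \eqref{scaling} then holds. This is what makes the inequality dimensionally consistent under the rescalings $x\mapsto\delta x$.

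\emph{Step 1 (the scale-dependent constant).} For small $\delta>0$, let $\CC(\delta)$ be the smallest constant such that
\[
\int_{|x-a|<\delta}\prod_j f_j^{q_j}(B_j(x))\,dx\le \CC(\delta)\prod_j\Big(\int f_j\Big)^{q_j}
\]
holds for all centers $a$ in a fixed small ball around $0$ and all admissible $f_j$. The goal is to show $\limsup_{\delta\to0}\CC(\delta)\le\BL(\bL,\q)$, and then take $U$ to be a small ball.

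\emph{Step 2 (a priori finiteness and continuity).} First I show that $\CC(\delta_0)<\infty$ for some $\delta_0>0$. This should follow from a crude argument, for instance a finite cover combined with local diffeomorphism arguments and \eqref{dim}. The dimension condition is open under small perturbations of the $L_j$ in the relevant sense, so it persists for $dB_j(a)$ with $a$ near $0$. Second, I invoke the continuity of $L\mapsto\BL(L,\q)$ near a finite point. This gives $\BL(dB(a),\q)\le(1+\eta)\BL(\bL,\q)$ for all $a$ in a small ball.

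\emph{Step 3 (the recursion; main obstacle).} I want an inequality of the form $\CC(\delta)\le(1+O(\delta^{\alpha}))\,\sup_{|a|\lesssim\delta}\BL(dB(a),\q)^{\theta}\,\CC(\delta^{1/2})^{1-\theta}$, or more simply $\CC(\delta)\le(1+O(\delta^\alpha))\BL_{\rm loc}$ applied to $\CC$ at a coarser scale. To get it, I fix a ball $|x-a|<\delta$ and write the integrand as an average, over $x'$ in that ball, of integrals over tiny balls of radius $\delta'=\delta^{1+\beta}$ around $x'$. On each tiny ball, Taylor's theorem (the $B_j$ are $C^2$) gives $B_j(x)=B_j(x')+dB_j(x')(x-x')+O(\delta'^2)$. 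The error $O(\delta'^2)$ is much smaller than $\delta'$, so I replace $f_j$ by local Gaussian-weighted pieces $f_j\cdot\gamma_{j}$ at width $\sim\delta'$, paying only a factor $1+O(\delta^{\alpha})$. I then apply the linear inequality with Gaussian weights for the datum $dB(a)$. Its constant is at most $\BL(dB(a),\q)$, by Lieb's theorem: a Gaussian times a Gaussian is still Gaussian. Finally, I integrate in $x'$ and reassemble the Gaussian-localized integrals $\int f_j\gamma_j(\cdot-B_j(x'))$. This reassembly is itself a nonlinear Brascamp--Lieb form at the larger scale, so it is controlled by $\CC$ at that scale. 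Making the exponents close properly requires the multilinear structure, a Hölder-type splitting with weights $q_j$ together with \eqref{scaling}. This bookkeeping, and in particular the control of the Taylor error uniformly in $a$, is where I expect the real difficulty to lie.

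\emph{Step 4 (closing the induction).} Iterating the recursion along the scales $\delta_k=\delta_0^{(1+\beta)^k}$ and using $\prod_k(1+O(\delta_k^\alpha))<\infty$ shows that $\CC(\delta_k)$ stays bounded. Feeding this boundedness back into the recursion gives $\limsup\CC(\delta)\le(1+\eta)\BL(\bL,\q)$. Choosing $\eta$ and $\delta$ small relative to $\epsilon$ then yields \eqref{locBL} on $U=\{|x|<\delta\}$.
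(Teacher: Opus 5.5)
\textbf{There is a genuine gap, in Step 3, and it propagates to Step 4.} The paper does not prove this statement; it quotes it from \cite[Theorem 1.1]{BBBCF20}, so I am judging your sketch as a reconstruction of that induction-on-scales argument.

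On a ball of radius $\delta'$, the expansion $B_j(x)=B_j(x')+dB_j(x')(x-x')+O(\delta'^2)$ lets you replace $f_j\circ B_j$ by $f_j$ composed with an affine map only if $f_j$ is essentially constant at scale $\delta'^2$. For arbitrary nonnegative $f_j$, an $O(\delta'^2)$ shift of the argument is not a small perturbation. Inputs that concentrate or oscillate at scales $\ll\delta'^2$ feel the curvature as an order-one effect. So bounding the small-ball form by $(1+O(\delta^\alpha))\BL(dB(a),\q)$, uniformly in the inputs, is just the theorem again at a smaller scale. Multiplying $f_j$ by a Gaussian weight $\gamma_j$ does nothing about this roughness. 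Also, $f_j\gamma_j$ is not Gaussian, so Lieb's theorem is irrelevant at that point; the linear inequality holds for all inputs anyway.

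In the argument that works, the two scales play the opposite roles. Insert $1=\Lambda^{-1}\int\prod_j g_j^{q_j}(dB_j(a)(y-x))\,dy$, where the $g_j$ are Gaussian near-extremisers of width $\sigma$ for the linearised datum. Then $\Lambda\ge(1-\eta)\BL(dB(a),\q)\prod_j(\int g_j)^{q_j}$. You need eccentricity bounds on these Gaussians that are uniform over nearby data, to control Taylor errors against the width. Next replace $dB_j(a)(y-x)$ by $B_j(y)-B_j(x)$ and integrate in $x$ first. That fine-scale form has the rough inputs $f_j\,g_j(B_j(y)-\cdot)$ and is controlled only by $\CC(C\sigma)$. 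The remaining coarse form in $y$ has the convolved inputs $f_j*g_j$, which are smooth at scale $\sigma\gg\delta^2$. Only this form can be linearised, and its factor $\approx\BL(dB(a),\q)$ cancels against $\Lambda$.

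The outcome is $\CC(\delta)\le(1+O(\delta^\alpha))\,\CC(C\sigma)$ with $C\sigma<\delta$, e.g.\ $\sigma=\delta^{1+\beta}$. Coarse scales are controlled by fine ones, and there is no contraction factor $\BL^\theta$. Your $\theta$ and the ``H\"older-type splitting'' have no mechanism behind them. Hence Step 4, which iterates downward from an a priori finite $\CC(\delta_0)$, cannot produce the constant $(1+\epsilon)\BL(\bL,\q)$.

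The induction must instead be anchored at fine scales relative to the regularity of the inputs. Take a class of mollified inputs that is stable under the operations above; for it, the form on sufficiently small balls is directly within $1+o(1)$ of $\BL$. Then prove the recursion uniformly in that class and remove the regularity by density at the end. Separately, the a priori finiteness in your Step 2 is asserted rather than proved, and for general (non-simple) data it is not routine.
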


However, we cannot apply Theorem \ref{tnBL} directly to $\pi_j, 1 \le j \le n$, since it lies in the multilinear Radon-transform regime of \cite{S11, TW03}, where the corresponding linear Brascamp--Lieb constant is not finite. Fortunately, adding the auxiliary projection $\pi_{n+1}(x,t):=x$ yields the following estimate, analogous to \cite[Proposition 1]{Z24}.

\begin{proposition}\label{pWL}
On every step-two Carnot group $\G$, the following inequality holds:
\begin{align}\label{pLWco1}
\int_{\G} \prod_{j = 1}^{n + 1} f_j(\pi_j(x,t))\,dx\,dt \le \prod_{j = 1}^{n + 1} \|f_j\|_{n},
\end{align}
for all nonnegative measurable functions $f_1,\ldots,f_n$ on $\R^{n+m-1}$ and every nonnegative measurable function $f_{n+1}$ on $\R^n$.
\end{proposition}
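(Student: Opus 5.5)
The plan is to get rid of the nonlinearity of the maps $\pi_j$ by freezing the horizontal variable $x$. For fixed $x$, each $\pi_j$ with $1\le j\le n$ acts on the vertical variable only by a translation. This lets us integrate out $t\in\R^m$ with Hölder's inequality, after which only a Euclidean Brascamp--Lieb inequality on $\R^n$ remains. That inequality is geometric, so its constant is $1$ by Theorem~\ref{tfBL}. Throughout we may assume every $\|f_j\|_n$ is finite; otherwise there is nothing to prove. We also use Tonelli freely, since all functions are nonnegative.

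\emph{Step 1 (integration in $t$).} By \eqref{pro1}, for $1\le j\le n$ we have $\pi_j(x,t)=(\h{x}_j,\,t-c_j(x))$, where $c_j(x):=\frac12\sum_i x_ix_j\langle\U e_i,e_j\rangle\in\R^m$ depends only on $x$. Also $\pi_{n+1}(x,t)=x$. For fixed $x$, Hölder's inequality in $t$ with $n$ factors, each in $L^n(\R^m)$, gives
\[
\int_{\R^m}\prod_{j=1}^n f_j(\h{x}_j,t-c_j(x))\,dt\le\prod_{j=1}^n h_j(\h{x}_j),
\qquad h_j(z):=\Big(\int_{\R^m}f_j(z,s)^n\,ds\Big)^{1/n},\ z\in\R^{n-1}.
\]
Here we used the translation invariance of Lebesgue measure in $t$, which is exactly why the $x$-dependent shifts $c_j(x)$ disappear. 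Consequently the left-hand side of \eqref{pLWco1} is at most
\[
\int_{\R^n}f_{n+1}(x)\prod_{j=1}^n h_j(\h{x}_j)\,dx .
\]

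\emph{Step 2 (Euclidean Brascamp--Lieb on $\R^n$).} Consider the datum on $\R^n$ given by the coordinate projections $L_j(x)=\h{x}_j$ for $1\le j\le n$ and $L_{n+1}=\id_n$, all with exponent $q_j=1/n$. We check the geometric conditions \eqref{geoc}:
\begin{itemize}
\item $L_jL_j^*=\id$ holds because the $L_j$ are coordinate projections.
\item In $\sum_j\frac1nL_j^*L_j$, each basis vector $e_i$ is retained by $n-1$ of the first $n$ projections and by $L_{n+1}$. Hence the sum equals $\frac{(n-1)+1}{n}\id_n=\id_n$.
\end{itemize}
So $\BL=1$ by Theorem~\ref{tfBL}. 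Apply \eqref{BL} with inputs $h_j^n$ and $f_{n+1}^n$. This bounds the last integral by $\|f_{n+1}\|_n\prod_{j\le n}\|h_j\|_{L^n(\R^{n-1})}$.

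\emph{Step 3 (conclusion).} By Tonelli, $\|h_j\|_{L^n(\R^{n-1})}^n=\int_{\R^{n-1}}\int_{\R^m}f_j^n=\|f_j\|_n^n$, which yields \eqref{pLWco1}. There is no real obstacle in this argument. The only point needing care is Step~1: the nonlinear terms $c_j(x)$ depend on $x$ alone, never on $t$, so performing the $t$-integration first, before any Brascamp--Lieb step, turns them into harmless translations. One could alternatively invoke the nonlinear Theorem~\ref{tnBL}, but that is unnecessary here and would only give a local estimate.
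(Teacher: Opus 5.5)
Your proof is correct, and it takes a different route from the paper's.

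\textbf{The paper's route.} The paper linearizes at the origin. Since $d\pi_j(0)=\P_j$ and $d\pi_{n+1}(0)=\P_{n+1}$, the linearized maps form a geometric Brascamp--Lieb datum on all of $\R^{n+m}$: each $t_k$ is kept by the $n$ maps $\P_1,\dots,\P_n$, and each $x_i$ is kept by $n-1$ of them plus $\P_{n+1}$. The paper then applies the nonlinear Brascamp--Lieb theorem (Theorem \ref{tnBL}) to get the estimate with constant $1+\epsilon$ on a neighborhood $U$ of $0$. Finally it globalizes by rescaling with the dilations $\delta_r$, using \eqref{dlc} and \eqref{dlc2}, and letting $r\to+\infty$ and then $\epsilon\to0^+$.

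\textbf{Your route.} You use the exact form of \eqref{pro1}: the nonlinear terms are translations in $t$ that depend only on $x$. Hölder's inequality in the fibre $\R^m$ therefore removes them exactly, and only a geometric datum on $\R^n$ remains. That last step is elementary as well. Hölder's inequality bounds the integral by $\|f_{n+1}\|_n\,\bigl\|\prod_{j=1}^n h_j\circ L_j\bigr\|_{n/(n-1)}$, and the classical Loomis--Whitney inequality applied to $h_j^{n/(n-1)}$ finishes the argument, so Theorem \ref{tfBL} is not even needed.

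\textbf{What each buys.} Your argument is self-contained and gives the global inequality directly. It avoids the induction-on-scales machinery of \cite{BBBCF20}, the appeal to Lieb-type constants, and any limiting procedure. The paper's argument uses only the first-order behaviour of the maps at a single point together with homogeneity. It is thus a perturbative template that does not depend on the correction terms being pure fibre translations, and it mirrors \cite[Proposition 1]{Z24}.
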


\begin{proof}
For $1\le j\le n$, we have $d\pi_j(0)=\P_j$, while $d\pi_{n+1}(0)=\pi_{n+1}=: \P_{n+1}$. Direct calculation gives
\[
\P_j\P_j^* = \id_{n + m - 1}, \quad 1 \le j \le n, \quad \P_{n + 1} \P_{n + 1}^* = \id_{n}, \quad   \frac{1}{n} \sum_{j = 1}^{n + 1} \P_j^* \P_j = \id_{n + m}.
\]
Thus the datum is geometric, and Theorem \ref{tfBL} gives Brascamp--Lieb constant $1$. By Theorem \ref{tnBL}, for every $\epsilon>0$ there is a neighborhood $U$ of the origin such that, after replacing the functions in that theorem by $f_j^n$,
\begin{align}\label{nnn}
\int_U \prod_{j=1}^{n+1}f_j(\pi_j(x,t))\,dx\,dt
\le(1+\epsilon)\prod_{j=1}^{n+1}\|f_j\|_n    
\end{align}
for all nonnegative measurable functions $f_1,\ldots,f_n$ on $\R^{n+m-1}$ and every nonnegative measurable function $f_{n+1}$ on $\R^n$. Define $\delta_r^{(n+1)}(x)=rx$ on $\R^n$. Then
\begin{align}\label{dlc2}
\delta_r^{(n + 1)} \circ \pi_{n + 1} = \pi_{n + 1} \circ \delta_r.
\end{align}
For $r>0$, replacing $f_j$ by $f_j\circ\delta_r^{(j)}$ in \eqref{nnn} and a change of variables gives
\[
\int_{\delta_r(U)} \prod_{j=1}^{n+1}f_j(\pi_j(x,t))\,dx\,dt
\le(1+\epsilon)\prod_{j=1}^{n+1}\|f_j\|_n.
\]
The dilation factors cancel by \eqref{dlc} and \eqref{dlc2}. Letting $r\to+\infty$ and then $\epsilon\to0^+$ proves \eqref{pLWco1}.
\end{proof}

\subsection{Duality for Brascamp--Lieb inequalities}\label{ss23}

Our main tool is the duality between Brascamp--Lieb inequalities and entropy subadditivity established in \cite[Theorem 2.1]{CC09}, following the implementation in \cite{Z24}. Finiteness of the relevant entropies must be tracked carefully to avoid undefined expressions such as $-\infty+\infty$. We therefore work with a class of probability densities that is stable under the projections used below.

\begin{theorem}\label{tcc09}
Let $(\Omega,\S,\mu)$ be a measure space. For $1\le j\le m$, let $(M_j,\M_j,\nu_j)$ be a measure space and $p_j:\Omega\to M_j$ a measurable map. Fix $D\in\R$ and $c_j>0$ for $1\le j\le m$. If all nonnegative measurable functions $f_j:M_j\to[0,+\infty)$ satisfy
\begin{align}\label{gbl}
\int_\Omega \prod_{j = 1}^m f_j(p_j(x)) d\mu(x) \le e^D \prod_{j = 1}^m \left( \int_{M_j} f_j^{1/c_j}(t) d\nu_j(t)\right)^{c_j},
\end{align}
then the entropy inequality
\begin{align}\label{sae}
\sum_{j = 1}^m c_j S(f_{(p_j)}) \le S(f) + D
\end{align}
holds for every probability density $f$ in the class
\begin{align}\label{defw}
\W := \{f : \,  S(f) \in \R, \mbox{ and } \forall \, 1 \le j \le m, (p_j)_\#(f d\mu) =  f_{(p_j)} d\nu_j \mbox{ with } S(f_{(p_j)}) \in \R \}.
\end{align}
\end{theorem}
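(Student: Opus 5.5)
This is the direction "Brascamp--Lieb inequality implies entropy subadditivity" of \cite[Theorem 2.1]{CC09}. The plan is to test \eqref{gbl} against functions $f_j$ built from the pushforward densities $f_{(p_j)}$ themselves, and then use Jensen's inequality together with the nonnegativity of relative entropy.

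Fix $f\in\W$ and write $g_j:=f_{(p_j)}$ for $1\le j\le m$. The natural choice is $f_j:=g_j^{c_j}$. Then $\int_{M_j} f_j^{1/c_j}\,d\nu_j=\int_{M_j} g_j\,d\nu_j=1$, so \eqref{gbl} gives
\[
\int_\Omega G\,d\mu\le e^D,\qquad G(x):=\prod_{j=1}^m g_j(p_j(x))^{c_j}.
\]
Next I compare $G$ with $f$ on $\supp f$. By Jensen's inequality for the concave function $\ln$ and the probability measure $f\,d\mu$,
\[
\int_{\supp f} f\ln\frac{G}{f}\,d\mu\le \ln\int_{\supp f} G\,d\mu\le D.
\]
It remains to evaluate the left-hand side. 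By \eqref{pfe},
\[
\int_\Omega f(x)\ln g_j(p_j(x))\,d\mu(x)=\int_{M_j} g_j\ln g_j\,d\nu_j=S(g_j).
\]
Summing these identities with weights $c_j$ and subtracting $S(f)$ gives $\sum_j c_jS(g_j)-S(f)\le D$, which is \eqref{sae}.

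The main obstacle is justifying these manipulations without meeting undefined expressions. Three points need care.

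\begin{enumerate}[(a)]
\item Formula \eqref{pfe} is stated for bounded $\phi$, but $\phi=\ln g_j$ is unbounded. Since $S(g_j)\in\R$, the function $g_j|\ln g_j|$ is $\nu_j$-integrable. I would therefore apply \eqref{pfe} to truncations $\phi_N=\max(\min(\ln g_j,N),-N)$, approximating $\phi$ by bounded functions, and pass to the limit by monotone convergence on the positive and negative parts separately. This shows that $\ln g_j\circ p_j$ is $f\,d\mu$-integrable with the identity above.
\item The set where $g_j(p_j(x))=0$ has $f\,d\mu$-measure zero, again by \eqref{pfe} with $\phi=1_{\{g_j=0\}}$. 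So $G>0$ $f\,d\mu$-a.e., and the logarithms are finite a.e.
\item Jensen needs $\ln(G/f)$ to be quasi-integrable. Since $\ln f$ and each $\ln g_j\circ p_j$ are $f\,d\mu$-integrable (the former because $S(f)\in\R$), the combination $\ln(G/f)$ is integrable, and Jensen applies. Alternatively, one can use $\ln u\le u-1$ pointwise: this gives $\int f\ln(G/(e^Df))\le \int G e^{-D}-1\le 0$, which avoids Jensen altogether.
\end{enumerate}

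With these justifications the linear splitting of the integral is legitimate and the inequality follows.
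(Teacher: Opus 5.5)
Your argument is correct and is the standard proof of this direction of \cite[Theorem 2.1]{CC09}, which the paper cites (via \cite{Z24}) rather than reproving: you test \eqref{gbl} with $f_j=f_{(p_j)}^{c_j}$ and apply Jensen's inequality (equivalently $\ln u\le u-1$) to $\prod_j f_{(p_j)}^{c_j}(p_j(x))/f(x)$ under $f\,d\mu$. Your points (a)--(c) supply the integrability bookkeeping that the finiteness assumptions built into $\W$ are designed to guarantee.
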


\medskip

For the converse direction on $\R^k$, define
\begin{align}\label{defDk}
 \DP := \left\{f:\, f\ge0,\ f\mbox{ is bounded},\ \supp f\mbox{ is bounded},\ \int f = 1\right\}.
\end{align}
The class $\DP$ behaves well under Euclidean projections and under the nonlinear projections in \eqref{pro1}. The required stability facts and the following converse can be proved exactly as in \cite[Lemmas 2--3 and Theorem 6]{Z24}, so we omit the proofs.
\begin{theorem} \label{cr1}
Let $\G$ be a step-two Carnot group with projections $\{\pi_j\}_{j=1}^n$, and fix $D\in\R$ and $c_j>0$ for $1\le j\le n$. Suppose that every $f\in\DP$ satisfies
\begin{align}\label{sae2}
\sum_{j = 1}^n c_j S(f_{(\pi_j)}) \le S(f) + D.
\end{align}
Then for all nonnegative measurable functions $f_1, \ldots, f_n$ on $\R^{n + m - 1}$ we have
\begin{align}\label{gbl2}
\int_{\G} \prod_{j = 1}^{n} f_j(\pi_j(x,t))\,dx\,dt \le e^D \prod_{j = 1}^{n} \left( \int_{\R^{n + m - 1}} f_j^{1/c_j}(\h{x}_j,t)\,d\h{x}_j\,dt \right)^{c_j}.
\end{align}
\end{theorem}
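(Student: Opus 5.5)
The plan is to run the standard Carlen--Cordero-Erausquin duality argument in the converse direction, using the Gibbs variational principle. The entropy inequality \eqref{sae2} will be tested on one specific density built from the $f_j$ themselves. First we reduce to nice functions. Replace each $f_j$ by $f_j^{(N)}:=\min(f_j,N)\,\mathbf 1_{B_N}$, where $B_N$ is the ball of radius $N$ in $\R^{n+m-1}$. Both sides of \eqref{gbl2} increase to their values for $f_j$ as $N\to\infty$, by monotone convergence. So it suffices to treat bounded $f_j$ with bounded support.

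Set $F(x,t):=\prod_{j=1}^n f_j(\pi_j(x,t))$. If $\int F=0$ there is nothing to prove. Otherwise we check that $h:=F/\int F$ lies in $\DP$. It is clearly bounded. For bounded support, let $n\ge2$ and suppose $F(x,t)>0$. Each coordinate $x_i$ survives in $\h x_j$ for every $j\neq i$, so all $x_i$ are bounded. Then $t=(\pi_j(x,t))_t+\frac12\sum_i x_ix_j\langle\U e_i,e_j\rangle$ is bounded as well. Next we need the pushforwards $h_j:=h_{(\pi_j)}$ to exist and to be bounded with bounded support. This is the stability fact: the map $(x,t)\mapsto(\pi_j(x,t),x_j)$ is a polynomial diffeomorphism of $\R^{n+m}$ of triangular form with Jacobian determinant $1$. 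Hence $h_j(z)=\int_\R h(\Phi_j^{-1}(z,s))\,ds$, where $\Phi_j$ denotes this diffeomorphism. This is bounded because $h$ is bounded and the $s$-range is bounded on $\supp h$, and it is supported in the compact set $\pi_j(\supp h)$. Thus $h\in\DP$, every $h_j\in\DP$, and all the entropies involved are finite.

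Now we compute. On $\supp h$ we have $\ln F=\sum_j\ln f_j\circ\pi_j$, and each term is finite there. By \eqref{pfe},
\[
S(h)=\int h\ln F-\ln\!\int F=\sum_{j=1}^n\int_{\R^{n+m-1}} h_j\ln f_j-\ln\!\int F .
\]
Here $h_j>0$ forces $f_j>0$ almost everywhere, so no $-\infty$ appears. For each $j$ we apply Jensen's inequality (the Gibbs inequality) to the probability density $h_j$ with $g=f_j^{1/c_j}$:
\[
\int h_j\ln f_j=c_j\int_{\supp h_j} h_j\ln\frac{f_j^{1/c_j}}{h_j}+c_jS(h_j)\le c_j\ln\!\int f_j^{1/c_j}+c_jS(h_j).
\]
Summing these bounds and invoking \eqref{sae2} for $h$ gives
\[
\ln\!\int F\le\sum_j c_jS(h_j)-S(h)+\sum_jc_j\ln\!\int f_j^{1/c_j}\le D+\sum_jc_j\ln\!\int f_j^{1/c_j}.
\]
Exponentiating yields \eqref{gbl2}.

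The main obstacle is the bookkeeping in the second step. We must verify that $h$ and all the $h_j$ genuinely belong to $\DP$, in particular that the pushforwards under the nonlinear $\pi_j$ are absolutely continuous with bounded, compactly supported densities. Only then do the entropies make sense, and only then is \eqref{sae2} applicable without $\infty-\infty$ issues. The volume-preserving change of variables $\Phi_j$ (as in Lemma \ref{ldiff}) is the tool I would use to handle this.
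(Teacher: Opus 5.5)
Your proposal is correct and is essentially the argument the paper defers to (\cite[Lemmas 2--3 and Theorem 6]{Z24}, i.e.\ the converse half of the duality in \cite[Theorem 2.1]{CC09}). The steps match: you reduce by monotone convergence to bounded, boundedly supported $f_j$, test \eqref{sae2} on $h=\prod_j f_j\circ\pi_j\big/\int\prod_j f_j\circ\pi_j$, and conclude with the Gibbs inequality $\int h_{(\pi_j)}\ln g\le S(h_{(\pi_j)})+\ln\int g$. Your check that $h\in\DP$ and that each $h_{(\pi_j)}$ is bounded with bounded support, via the unimodular shear $(x,t)\mapsto(\pi_j(x,t),x_j)$, is exactly the stability fact the paper cites and omits.
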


\begin{remark}\label{finite}
By the stability properties of $\DP$ just cited, all unconditional entropies used below are finite. Proposition \ref{pcollection}(i) then ensures that the conditional entropies under the integral signs are finite almost everywhere on the support.
\end{remark}

\section{Proof of Theorem \ref{t1}}\label{s3}

\subsection{The case \texorpdfstring{$p=1$}{p=1}}\label{ss31}

In the proof we always assume $q \ge 2$, otherwise there is nothing to prove.  Identify $\R^{q\times1}$ with $\R^q$ and write $x=(x_1,\ldots,x_q)$. The projections in \eqref{defpiij} are then
\begin{align}
    \pi_i(x,y,t) = \left(\h{x}_{i},y,t + \frac{x_i y}{2}e_i\right), \quad 1\le i\le q,
    \qquad \pi_{q+1}(x,y,t)=\left(x,t-\frac{y}{2}x\right).
\end{align}
For $1\le i\le q$, define a diffeomorphism $\varphi_i$ of $\R^{2q}\cong\R^{q-1}\times\R\times\R^q$ by
\begin{align}\label{defvp}
    \varphi_i\left(\h{x}_{i}, y, t \right) := \left(\h{x}_{i}, y, t - \sum_{j \ne i} \frac{x_{j} y}{2} e_j\right).
\end{align}
A direct calculation gives $|J\varphi_i|\equiv1$. We also define the diffeomorphism $\h{\varphi}_i$ of $\R^{2q+1}\cong\R^q\times\R\times\R^q$ by
\begin{align}\label{defvp2}
    \h{\varphi}_i(x,y,t) := \left(x,y,t + \sum_{j \ne i} \frac{x_jy}{2}e_j\right),
\end{align}
which also satisfies  $|J\h{\varphi}_i|\equiv1$. Finally, let
\begin{align}\label{defP}
    \Phi_i(x,y,t) := \left(\h{x}_i,\P_i\left(t-\sum_{j\ne i}\frac{x_jy}{2}e_j\right)\right),
\end{align}
where $\P_i(x)=\h{x}_i$ for $x\in\R^q$. Since $\P_i(e_i)=0$, it holds that 
\begin{align}\label{defP2}
\Phi_i(x,y,t) = \left(\h{x}_i,\P_i\left(t-\frac{y}{2}x\right)\right) =  \left(\h{x}_i,\P_i\left(t-\sum_{j\ne i}\frac{x_jy}{2}e_j + \frac{x_iy}{2}e_i\right)\right).   
\end{align}
Set $\CC_0:=\ln\bigl(\|\RR\|_{\frac32\to3}\bigr)$. By Theorem \ref{cr1}, it suffices to prove
\begin{align}\label{main0}
\sum_{j = 1}^q \frac{2}{3q}  S(f_{(\pi_j)}) + \frac{q + 1}{3q} S(f_{(\pi_{q + 1})}) \le S(f) + \CC_0, \qquad \forall \, f \in \DP.    
\end{align}
We henceforth assume $q\ge2$. For ease of notation, let $(X,Y,T)\sim f\in\DP$, where $X=(X_1,\ldots,X_q)$ and $T=(T_1,\ldots,T_q)$. Fix $1\le i\le q$. Applying Lemma \ref{ldiff} to $\varphi_i$ and then using Proposition \ref{pcollection}(i), we obtain
\begin{align}\nonumber
    &S(\pi_i(X,Y,T)) = S(\varphi_i(\pi_i(X,Y,T))) \\
    \label{eee}
    = \, &S(\Phi_i(X,Y,T)) + \int_{\supp f_{\Phi_i(X,Y,T)}} S\left(Y, T_i + \frac{1}{2} X_i Y \Big| \Phi_i(X,Y,T) = \eta\right)  f_{\Phi_i(X,Y,T)}(\eta) d\eta,
\end{align}
and, similarly,
\begin{align}\nonumber
&S(\pi_{q + 1}(X,Y,T))  \\
\label{eee2}
= \, &S(\Phi_i(X,Y,T)) + \int_{\supp f_{\Phi_i(X,Y,T)}} S\left(X_i, T_i - \frac{1}{2} X_i Y \Big| \Phi_i(X,Y,T) = \eta\right)  f_{\Phi_i(X,Y,T)}(\eta) d\eta.
\end{align}
Applying Theorems \ref{tH1} and \ref{tcc09} gives, for almost every $\eta\in\supp f_{\Phi_i(X,Y,T)}$,
\begin{align*}
&S\left(Y, T_i + \frac{1}{2} X_i Y \Big| \Phi_i(X,Y,T) = \eta\right)  +  S\left(X_i, T_i - \frac{1}{2} X_i Y \Big| \Phi_i(X,Y,T) = \eta\right)  \\
\le & \, \frac{3}{2} \left( S\left(X_i, Y, T_i  \Big| \Phi_i(X,Y,T) = \eta\right) + \CC_0 \right).
\end{align*}
Adding \eqref{eee} and \eqref{eee2}, using the preceding conditional inequality, and then applying Proposition \ref{pcollection}(i) and Lemma \ref{ldiff} to $\h{\varphi}_i$ yields
\begin{align}\nonumber
&S(\pi_{i}(X,Y,T)) + S(\pi_{q + 1}(X,Y,T)) \\
\nonumber
\le & \,2 S(\Phi_i(X,Y,T))  + \frac{3}{2} \int_{\supp f_{\Phi_i(X,Y,T)}} S\left(X_i, Y, T_i  \Big| \Phi_i(X,Y,T) = \eta\right) f_{\Phi_i(X,Y,T)}(\eta) d\eta + \frac{3}{2} \CC_0 \\
\label{main1}
= & \frac{1}{2} S(\Phi_i(X,Y,T)) + \frac{3}{2} S(X,Y,T) + \frac{3}{2} \CC_0.
\end{align}
Summing \eqref{main1} over $i=1,\ldots,q$ gives
\begin{align}\label{main2}
\sum_{i = 1}^{q} S(\pi_i(X,Y,T)) + q S(\pi_{q + 1}(X,Y,T)) \le \frac{1}{2} \sum_{i = 1}^q S(\Phi_i(X,Y,T)) + \frac{3q}{2} S(X,Y,T) + \frac{3q}{2} \CC_0.
\end{align}
For $1\le i\le q$, define $\QQ_i(x,t)=(\h{x}_i,\h{t}_i)$. Then it follows from \eqref{defP2} that
\[
\Phi_i=\QQ_i\circ\pi_{q+1}, \qquad 1\le i\le q,
\]
and
\[
\QQ_i\QQ_i^* =  \id_{2q - 2}, \qquad 1 \le i \le q, \qquad  \frac{1}{q - 1}\sum_{i = 1}^q \QQ_i^* \QQ_i = \id_{2q},
\]
Thus Theorems \ref{tfBL} and \ref{tcc09} give
\begin{align}\label{main3}
\sum_{i = 1}^q S(\Phi_i(X,Y,T))
=\sum_{i = 1}^q S\left(\QQ_i(\pi_{q+1}(X,Y,T))\right)
\le(q-1)S(\pi_{q+1}(X,Y,T)).
\end{align}
Substituting \eqref{main3} into \eqref{main2} and multiplying the result by $2/(3q)$ gives \eqref{main0}.

\subsection{The case \texorpdfstring{$p>1$}{p>1}}

Recall that $x(j)=(x_{1j},\ldots,x_{qj})\in\R^q$ for $1\le j\le p$, and write $y=(y_1,\ldots,y_p)$. The map
\begin{align*}
  \G_{qp} &\longrightarrow \underbrace{\G_{q1} \# \cdots \# \G_{q1}}_{p \mbox{ times}} \\
  (x,y,t) &\mapsto (((x(1),y_1), \ldots, (x(p),y_p)),t)
\end{align*}
is a group isomorphism. Consequently, it remains to prove the following central-sum theorem, which also generalizes the procedure in \cite{Z24}.

\begin{theorem}\label{tz24}
Assume that a step-two Carnot group $\G\cong\R^n\times\R^m$ satisfies
\begin{align}\label{pLGG}
\int_{\G} \prod_{j = 1}^{n} f_j(\pi_j(x,t))\,dx\,dt \le C \prod_{j = 1}^{n } \|f_j\|_{1/c_j}
\end{align}
for all nonnegative measurable functions $f_1,\ldots,f_n$ on $\R^{n+m-1}$, where $c_j > 0$ and $C<+\infty$. Then, for every $N\in\N^*$,
\begin{align}\label{pLGGN}
\int_{\G_{\#}^N} \prod_{j = 1}^{nN} f_j(\h{\pi}_j(x,t))\,dx\,dt \le C^{\frac{Q - 1}{n(N - 1) + Q - 1}} \prod_{j = 1}^{nN} \|f_j\|_{1/\h{c}_j}
\end{align}
for all nonnegative measurable functions $f_1,\ldots,f_{nN}$ on $\R^{nN+m-1}$. Here $\G_{\#}^N=\underbrace{\G\#\cdots\#\G}_{N\text{ times}}$, $\{\h{\pi}_j\}_{j=1}^{nN}$ are the corresponding projections on $\G_{\#}^N$, $Q=n+2m$ is the homogeneous dimension of $\G$, and
\[
\h{c}_j=\frac{c_k(Q-1)+N-1}{N\bigl(n(N-1)+Q-1\bigr)}
\]
when $j=(l-1)n+k$, $1\le l\le N$, and $1\le k\le n$.
\end{theorem}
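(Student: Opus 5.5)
We argue entirely on the entropy side. By Theorem \ref{cr1}, applied on $\G_{\#}^N$ (whose rank is $nN$ and corank $m$), it suffices to show that every $f\in\DP$ on $\R^{nN+m}$ with $(X,T)\sim f$ satisfies
\[
\sum_{j=1}^{nN}\h{c}_j\,S(\h{\pi}_j(X,T))\le S(X,T)+\frac{Q-1}{n(N-1)+Q-1}\ln C .
\]
By Remark \ref{finite}, all entropies involved are finite. We write $X=(X^{(1)},\ldots,X^{(N)})$ with $X^{(l)}\in\R^n$, and $X^{(\neq l)}$ for the remaining $n(N-1)$ coordinates.

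\textbf{Step 1: one block.} In the central sum the structure matrices are block diagonal. Hence for $j=(l-1)n+k$ the correction term in \eqref{pro1} involves only $x^{(l)}$, and we have
\[
\h{\pi}_j(x,t)=\bigl(x^{(\neq l)},\pi_k(x^{(l)},t)\bigr),
\]
where $\pi_k$ is the $k$-th projection of $\G$. So no shear is needed. Applying Proposition \ref{pcollection}(i) and (iii) with conditioning on $X^{(\neq l)}=\xi$ gives
\[
S(\h{\pi}_j(X,T))=S(X^{(\neq l)})+\int S\bigl(\pi_k(X^{(l)},T)\,\big|\,X^{(\neq l)}=\xi\bigr)f_{X^{(\neq l)}}(\xi)\,d\xi .
\]
Hypothesis \eqref{pLGG} together with Theorem \ref{tcc09}, applied to the conditional densities, yields for a.e. $\xi$
\[
\sum_k c_k S\bigl(\pi_k(X^{(l)},T)\,\big|\,\xi\bigr)\le S\bigl(X^{(l)},T\,\big|\,\xi\bigr)+\ln C .
\]
Testing \eqref{pLGG} against dilations via \eqref{dlc} and \eqref{homoL} forces the scaling identity $(Q-1)\sum_k c_k=Q$. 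Integrating in $\xi$ and using \eqref{relS} once more, we obtain
\[
\sum_{k=1}^n c_k S(\h{\pi}_{(l-1)n+k}(X,T))\le \frac{1}{Q-1}S(X^{(\neq l)})+S(X,T)+\ln C .
\]

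\textbf{Step 2: eliminate the $x$-marginals.} The maps $x\mapsto x^{(\neq l)}$, $1\le l\le N$, form a geometric Euclidean datum with weight $1/(N-1)$. Therefore Theorems \ref{tfBL} and \ref{tcc09} give the Shearer-type bound
\[
\sum_l S(X^{(\neq l)})\le (N-1)S(X).
\]
Proposition \ref{pWL} on $\G_{\#}^N$, with auxiliary projection $(x,t)\mapsto x$, combined with Theorem \ref{tcc09}, gives
\[
\frac{1}{nN}\Bigl(\sum_{j=1}^{nN}S(\h{\pi}_j)+S(X)\Bigr)\le S(X,T),
\]
that is, $S(X)\le nN\,S(X,T)-\sum_j S(\h{\pi}_j)$.

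\textbf{Step 3: combine.} Summing Step 1 over $l$ and inserting both bounds from Step 2, we get
\[
\sum_{j}\Bigl(c_k+\tfrac{N-1}{Q-1}\Bigr)S(\h{\pi}_j)\le N\Bigl(1+\tfrac{n(N-1)}{Q-1}\Bigr)S(X,T)+N\ln C .
\]
Dividing by $N\bigl(n(N-1)+Q-1\bigr)/(Q-1)$ produces exactly the coefficients $\h{c}_j$ and the constant exponent $(Q-1)/(n(N-1)+Q-1)$. Theorem \ref{cr1} then returns \eqref{pLGGN}.

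\textbf{Main obstacle.} The delicate points are the bookkeeping in Step 1. First, one must apply Theorem \ref{tcc09} to the conditional densities. This requires checking that, for a.e. $\xi$, they lie in the class $\W$ for the projections $\pi_k$ of $\G$. This should follow from Proposition \ref{pcollection}(iii) and Remark \ref{finite}, as in \cite{Z24}. Second, one must justify the scaling identity $(Q-1)\sum_k c_k=Q$, which is obtained by dilation, since otherwise the coefficient of $S(X^{(\neq l)})$ would not match.
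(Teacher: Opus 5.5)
Your proposal is correct and follows the paper's proof essentially step for step. You use the same conditioning on the complementary blocks combined with the scaling identity $(Q-1)\sum_k c_k=Q$, the same Shearer-type bound $\sum_l S(X^{(\neq l)})\le (N-1)S(X)$, the same application of Proposition \ref{pWL} on $\G_{\#}^N$, and the same final combination. The only addition needed is that the trivial case $N=1$ should be set aside first, as the paper does, because your Step 2 uses the weight $1/(N-1)$.
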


\begin{proof}
The case $N=1$ is immediate, so in the following we assume $N \ge 2$. For $x\in\R^{nN}$, write $x=(x^1,\ldots,x^N)$ with $x^j\in\R^n$, and let $\h{\P}_j$ be the projection that removes the block $x^j$. For ease of notation, let $(X,T)\sim f\in\DP$, where $X=(X^1,\ldots,X^N)$. Proposition \ref{pcollection}(i) gives, for $1\le l\le N$ and $1\le k\le n$,
\begin{align*}
S(\h{\pi}_{(l - 1)n + k}(X,T)) = S(\h{\P}_l(X)) + \int_{\supp f_{\h{\P}_l(X)} }S\left(\pi_k(X^l,T) \Big| \h{\P}_l(X) = \eta\right)  f_{\h{\P}_l(X)}(\eta) d\eta.
\end{align*}
Note that since  $C$ is finite, a scaling argument (using \eqref{homoL}) for \eqref{pLGG} yields $\sum_{k=1}^nc_k=Q/(Q-1)$. Multiplying by $c_k$ and summing over $k$ therefore gives
\begin{align*}
    &\sum_{k = 1}^n c_k S(\h{\pi}_{(l - 1)n + k}(X,T)) \\
    =  \, &\frac{Q}{Q - 1} S(\h{\P}_l(X)) + \int_{\supp f_{\h{\P}_l(X)} }\sum_{k = 1}^n c_kS\left(\pi_k(X^l,T) \Big| \h{\P}_l(X) = \eta\right)  f_{\h{\P}_l(X)}(\eta) d\eta,
\end{align*}
Moreover, \eqref{pLGG} and Theorem \ref{tcc09} imply that
\begin{align*}
\sum_{k = 1}^n c_k S\left(\pi_k(X^l,T) \Big| \h{\P}_l(X) 
=  \eta\right) \le 
S\left(X^l,T \Big| \h{\P}_l(X) = \eta\right) + D,
\end{align*}
where $D=\ln C$. Substituting this estimate into the preceding identity and applying Proposition \ref{pcollection}(i) again yields
\begin{align*}
  \sum_{k = 1}^n c_k S(\h{\pi}_{(l - 1)n + k}(X,T)) \le \frac{1}{Q - 1} S(\h{\P}_l(X)) + S(X,T) + D.
\end{align*}
Summing over $l$ gives
\begin{align}\label{main01}
    \sum_{j = 1}^{nN} d_j S(\h{\pi}_{j}(X,T)) \le \frac{1}{Q - 1} \sum_{l = 1}^N S(\h{\P}_l(X)) + NS(X,T) + ND,
\end{align}
where $d_j = c_k$ for $j = (l - 1)n + k$ with $1 \le l \le N$ and $1 \le k \le n$. 
Since
\[
\h{\P}_l\h{\P}_l^* =  \id_{n(N- 1)}, \qquad 1 \le l \le N, \qquad  \frac{1}{N - 1}\sum_{l = 1}^N \h{\P}_l^* \h{\P}_l = \id_{nN},
\]
Theorems \ref{tfBL} and \ref{tcc09} imply
\begin{align*}
\sum_{l = 1}^N S(\h{\P}_l(X))    \le   ( N - 1) S(X).
\end{align*}
Substituting this into \eqref{main01} yields
\begin{align}\label{main02}
\sum_{j = 1}^{nN} d_j S(\h{\pi}_{j}(X,T)) \le \frac{N - 1}{Q - 1} S(X) + NS(X,T) + ND.    
\end{align}
Finally, Proposition \ref{pWL} and Theorem \ref{tcc09}, applied on $\G_\#^N$ with the auxiliary projection $(x,t)\mapsto x$, give
\[
\sum_{j = 1}^{nN} S(\h{\pi}_{j}(X,T)) + S(X) \le nN S(X,T).
\]
Set $a:=(N-1)/(Q-1)$. Combining the estimate above with \eqref{main02} gives
\[
\sum_{j=1}^{nN}(d_j+a)S(\h{\pi}_j(X,T))
\le N(1+na)S(X,T)+ND.
\]
Dividing by $N(1+na)=N\bigl(n(N-1)+Q-1\bigr)/(Q-1)$ gives
\begin{align*}
  \sum_{j = 1}^{nN} \h{c}_j S(\h{\pi}_{j}(X,T)) \le S(X,T) + \frac{D(Q - 1)}{n(N - 1) + Q - 1}.   
\end{align*}
Now inequality \eqref{pLGGN} follows from Theorem \ref{cr1} .
\end{proof}

\begin{proof}[Proof of Theorem \ref{t1} for the case $p > 1$]
We now apply Theorem \ref{tz24} with
\[
\G=\G_{q1},\qquad n=q+1,\qquad m=q,\qquad Q=3q+1,
\qquad N=p,
\]
and
\[
C=\|\RR\|_{\frac32\to3},\qquad
c_k=\frac{2}{3q},\quad 1\le k\le q,\qquad
c_{q+1}=\frac{q+1}{3q}.
\]
Put
\[
A:=n(N-1)+Q-1=(q+1)(p-1)+3q=qp+p+2q-1.
\]
Theorem \ref{tz24} gives
\[
\h{c}_{(l-1)(q+1)+k}=\frac{p+1}{pA},
\qquad 1\le l\le p,\ 1\le k\le q,
\]
and
\[
\h{c}_{l(q+1)}=\frac{q+p}{pA},
\qquad 1\le l\le p,
\]
with constant $\|\RR\|_{\frac32\to3}^{3q/A}$. Under the preceding identification of $\G_{qp}$ with the central sum, the projections $\h{\pi}_{(l-1)(q+1)+k}$ for $1\le k\le q$ correspond to $\pi_{(l-1)q+k}$, while $\h{\pi}_{l(q+1)}$ corresponds to $\pi_{qp+l}$. Consequently, the first $qp$ norm exponents are $pA/(p+1)$ and the last $p$ norm exponents are $pA/(q+p)$. This is precisely \eqref{LWco1} and completes the proof of Theorem \ref{t1}.    
\end{proof}

\section{Applications}\label{s4}


We record consequences of the Loomis--Whitney inequality for step-two Carnot groups. The deductions are standard, so we omit their proofs. For proofs and historical background, see \cite{CDPT07, FP22, S02, Z24} and the references therein. In the set-valued statement below, $|\cdot|$ denotes Lebesgue outer measure.

\begin{proposition}
Assume that a step-two Carnot group $\G$ satisfies a Loomis--Whitney inequality of the form
\begin{align*}
\int_{\G} \prod_{j = 1}^{n} f_j(\pi_j(x,t))\,dx\,dt \le C \prod_{j = 1}^{n } \|f_j\|_{1/c_j}
\end{align*}
for all nonnegative measurable functions $f_1,\ldots,f_n$ on $\R^{n+m-1}$, where $c_j > 0$ and $C<+\infty$. Then
\[
|E| \le C \prod_{j = 1}^n |\pi_j(E)|^{c_j}
\]
for every set $E\subset\G$.
\end{proposition}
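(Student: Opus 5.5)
The plan is to test the functional inequality against indicator functions of the projections. First I would reduce to sets of finite outer measure and to measurable $E$. If some $|\pi_j(E)|=+\infty$, there is nothing to prove. The case where some $|\pi_j(E)|=0$ is also immediate: $E\subset\pi_j^{-1}(\pi_j(E))$, and since $\pi_j$ is a linear projection composed with a volume-preserving shear, this preimage is null by Fubini. So assume all $|\pi_j(E)|<\infty$.

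Since $|\cdot|$ is outer measure and $\pi_j(E)$ need not be measurable, I would choose for each $j$ a measurable (say $G_\delta$) set $B_j\supset\pi_j(E)$ with $|B_j|=|\pi_j(E)|$. Then set
\[
F:=\bigcap_{j=1}^n\pi_j^{-1}(B_j).
\]
This set is measurable because each $\pi_j$ is smooth, hence Borel, and it contains $E$, so $|E|\le|F|$.

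Next I would apply the assumed inequality with $f_j:=\mathbf 1_{B_j}$. The key observation is that
\[
\prod_{j=1}^n f_j(\pi_j(x,t))=\mathbf 1_F(x,t),
\]
so the left-hand side equals $|F|$. On the right-hand side, $\|f_j\|_{1/c_j}=|B_j|^{c_j}=|\pi_j(E)|^{c_j}$. Combining these gives
\[
|E|\le|F|\le C\prod_{j=1}^n|\pi_j(E)|^{c_j}.
\]

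The only mild obstacle is the measurability bookkeeping: the projection of a non-measurable set, and the fact that outer measure has to be approximated by measurable supersets. Choosing the $B_j$ as $G_\delta$ hulls and using the Borel measurability of $\pi_j$ handles this. No entropy argument is needed for this step.
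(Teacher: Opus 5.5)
Your proof is correct. It is the standard deduction the paper has in mind: the paper omits the proof as routine and points to \cite{FP22, Z24}, and the argument there is the same one you give, testing the functional inequality with $f_j=\chi_{B_j}$ for Borel hulls $B_j\supset\pi_j(E)$ of equal outer measure and using $E\subset\bigcap_j\pi_j^{-1}(B_j)$. The one bookkeeping point is that the case $|\pi_j(E)|=0$ must take precedence over $|\pi_k(E)|=+\infty$ to avoid $0\cdot\infty$, and your shear-plus-Fubini argument giving $|E|=0$ in that case already handles this.
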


We next recall the horizontal variation and perimeter. Let
\[
\F(\G):=\{\varphi\in C^1_0(\G,\R^n):|\varphi|\le1\},
\]
where the bound is pointwise. For $f\in L^1(\G)$, define its {\it variation} by
\[
\Var_{\G}(f):=\sup_{\varphi\in\F(\G)}\int_{\G}f(x,t)\sum_{j=1}^n\X_j\varphi_j(x,t)\,dx\,dt.
\]
Let $\BV(\G)$ be the space of functions $f\in L^1(\G)$ with finite variation. It is a Banach space with norm
\[
\|f\|_{\BV( \G)} := \|f\|_1 + \Var_{ \G} (f).
\]
For a measurable set $E$, define its {\it perimeter} by
\[
\PP_{ \G} (E):= \Var_{ \G} (\chi_E).
\] 
See \cite{CDG94, FSS96} for BV functions associated with systems of vector fields and \cite{CDPT07} for the first Heisenberg group $\H^1$.

\begin{proposition}
Assume that a step-two Carnot group $\G$ satisfies a Loomis--Whitney inequality of the form
\begin{align*}
\int_{\G} \prod_{j = 1}^{n} f_j(\pi_j(x,t))\,dx\,dt \le C \prod_{j = 1}^{n } \|f_j\|_{1/c_j}
\end{align*}
for all nonnegative measurable functions $f_1,\ldots,f_n$ on $\R^{n+m-1}$, where $c_j > 0$ and $C<+\infty$. Then there is a constant $C'>0$ such that
\[
\|f\|_{\frac{Q}{Q - 1}} \le C' \Var_{ \G}(f), \qquad \forall \, f \in \BV( \G).
\]
In particular,
\[
|E|^{\frac{Q - 1}{Q}} \le C' \PP_{ \G}(E), \qquad \forall \, E \mbox{ with finite perimeter}.
\]
\end{proposition}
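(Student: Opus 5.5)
This is the standard Loomis--Whitney-to-Sobolev route, adapted to the nonisotropic exponents $c_j$. Throughout I assume finite Loomis--Whitney constant $C$, so that the scaling identity $\sum_{k=1}^n c_k=Q/(Q-1)$ holds (as noted in the proof of Theorem \ref{tz24}). The plan has three steps: prove the isoperimetric inequality first for nice sets, then pass to functions by the coarea formula, and finally reach all of $\BV(\G)$ by approximation.

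\textbf{Step 1: isoperimetric inequality for smooth sets.} Let $E$ be a bounded open set with smooth boundary. By the preceding proposition,
\[
|E|\le C\prod_{j=1}^n|\pi_j(E)|^{c_j}.
\]
The key estimate is
\[
|\pi_j(E)|\le \tfrac12\int_{\partial E}|\langle \X_j,\nu\rangle|\,d\sigma\le \tfrac12\PP_\G(E),
\]
where $\nu$ is the unit outer normal. To see this, recall that $\pi_j(g)$ is the $\J_j$-component of $g=(y,s)\cdot \ell e_j$. So the fibre of $\pi_j$ over a point $(y,s)$ is the left coset $(y,s)\cdot\L_j$, which is exactly an integral curve of the left-invariant field $\X_j$. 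Along this fibre the coordinate $\ell$ is the flow parameter. The map $(y,s,\ell)\mapsto(y,s)\cdot\ell e_j$ has Jacobian $1$, since $\pi_j$ is volume preserving in the sense of \eqref{pro1}. Every fibre that meets $E$ must cross $\partial E$ at least twice. Hence the area formula along the flow of $\X_j$ gives $2|\pi_j(E)|\le\int_{\partial E}|\langle\X_j,\nu\rangle|\,d\sigma$. For smooth $E$ the right-hand side is bounded by the horizontal perimeter $\PP_\G(E)=\int_{\partial E}|(\langle\X_1,\nu\rangle,\dots,\langle\X_n,\nu\rangle)|\,d\sigma$. Combining,
\[
|E|\le C\,2^{-Q/(Q-1)}\PP_\G(E)^{Q/(Q-1)},
\]
that is, $|E|^{(Q-1)/Q}\le C'\PP_\G(E)$.

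\textbf{Step 2: from sets to functions.} For $f\ge0$ in $C^1_0$, I would apply the coarea formula for the horizontal variation, $\Var_\G(f)=\int_0^\infty\PP_\G(\{f>s\})\,ds$, together with the layer-cake inequality
\[
\|f\|_{\frac{Q}{Q-1}}\le\int_0^\infty|\{f>s\}|^{\frac{Q-1}{Q}}\,ds.
\]
The layer-cake inequality follows from Minkowski's integral inequality applied to $f=\int_0^\infty\chi_{\{f>s\}}\,ds$. The superlevel sets are smooth for a.e.\ $s$ by Sard's theorem, so Step 1 applies to them. For general $f$, split $f=f^+-f^-$ and use $\Var_\G(|f|)\le\Var_\G(f)$.

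\textbf{Step 3: approximation in $\BV(\G)$.} For general $f\in\BV(\G)$ I would use the Anzellotti--Giaquinta type approximation valid for Carnot groups (\cite{FSS96}). It provides smooth $f_k\to f$ in $L^1$ with $\Var_\G(f_k)\to\Var_\G(f)$. Fatou's lemma on the left side then concludes. The set statement is the case $f=\chi_E$.

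The main obstacle is Step 1, namely making the projection-versus-perimeter estimate rigorous. One must confirm that the fibres of $\pi_j$ are the $\X_j$-orbits with unit Jacobian, which is immediate from the decomposition $g=(y,s)\cdot\ell e_j$. One must also handle nonsmooth sets, which is why I restrict to smooth sets first and then approximate.
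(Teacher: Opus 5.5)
Your argument is correct, and it is a standard deduction of the kind the paper defers to (the paper gives no proof). It is not, however, the most direct route from the functional hypothesis. That route is Gagliardo's. For $f\in C^1_c(\G)$, the fibre of $\pi_j$ through $g$ is the horizontal line $\ell\mapsto\pi_j(g)\cdot\ell e_j$, along which $\frac{d}{d\ell}f=\X_j f$. Hence $|f(g)|\le F_j(\pi_j(g))$ with $F_j(z):=\frac12\int_{\R}|\X_j f(z\cdot\ell e_j)|\,d\ell$. Since $\sum_j c_j=\frac{Q}{Q-1}$, this gives $|f|^{\frac{Q}{Q-1}}\le\prod_j F_j(\pi_j)^{c_j}$. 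Applying the hypothesis to $F_j^{c_j}$, together with the unit Jacobian of $(z,\ell)\mapsto z\cdot\ell e_j$, yields $\|f\|_{\frac{Q}{Q-1}}^{\frac{Q}{Q-1}}\le C\prod_j\bigl(\frac12\|\X_j f\|_1\bigr)^{c_j}$ in one line. One then approximates in $\BV(\G)$ and takes $f=\chi_E$.

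You instead use only the set-level projection inequality. You prove the isoperimetric inequality first, via the fibre-counting bound $2|\pi_j(E)|\le\int_{\partial E}|\langle\X_j,\nu\rangle|\,d\sigma$, and recover the Sobolev inequality by coarea and layer cake (the Federer--Fleming--Maz'ya direction). Your route buys a weaker hypothesis: the indicator-function (restricted) form of Loomis--Whitney already suffices. This is relevant because in \cite{FP22} passing from the set bound to the strong-type functional bound needed multilinear interpolation. The cost is extra machinery: the area formula on $\partial E$, the horizontal coarea formula, and Sard's theorem. Both routes end with the same constant $C'=C^{(Q-1)/Q}/2$.

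One technical correction: Sard's theorem for a real-valued function on $\R^{n+m}$ requires $C^{n+m}$ regularity, so run Step 2 for $C^\infty_c$ functions rather than $C^1_0$. This costs nothing. After multiplying by a cutoff $\chi(\delta_{1/R}\,\cdot)$, whose horizontal gradient is $O(1/R)$, the approximants of Step 3 can be taken in $C^\infty_c$ (and nonnegative when $f\ge 0$). Then the bounded-set version of Step 1 is all you need.
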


\paragraph*{Acknowledgements.}
 
S.-C. Mao is partially supported by the China Postdoctoral Science Foundation (Grant No. 2026M793367). Y. Zhang has received funding from the European Research Council (ERC) under the European Union’s Horizon 2020 research and innovation programme (grant agreement GEOSUB, No. 945655).

\paragraph*{Competing Interests.}

The authors have no relevant financial or non-financial interests to disclose.

\paragraph*{Declaration of AI Use.}

During the preparation of this manuscript, the authors used OpenAI GPT-5.6 for English language editing and stylistic improvements, including grammar, wording, and readability. All mathematical content was developed and verified by the authors, who take full responsibility for the final manuscript.

\bibliographystyle{abbrv}
\bibliography{LWbib}

\mbox{}\\
Sheng-Chen Mao\\
School of Mathematics and Statistics \\
Lanzhou University \\
No. 222 Tianshui South Road \\
Lanzhou 730000, P.R. China \\
Email: maoshengchen@lzu.edu.cn \quad or \quad maosci@163.com  

\mbox{}\\
Ye Zhang (\textit{corresponding author})\\
SISSA  \\
via Bonomea 265 \\
34136 Trieste, Italy \\
Email: yezhang@sissa.it \quad or \quad zhangye0217@gmail.com 

\end{document}